\crefname{equation}{Eq.}{Eqs.}
\declaretheorem[name=Lemma]{lem}
\declaretheorem[name=Theorem]{thm}
\declaretheorem[name=Corollary]{cor}
\DeclareMathOperator{\diam}{diam}
\DeclareMathOperator{\grad}{\nabla}
\DeclareMathOperator*{\argmin}{arg\,min}
\newcommand{\E}{{\mathbb E}}
\DeclareMathOperator{\sign}{sgn}
\DeclarePairedDelimiter{\norm}{\lVert}{\rVert}
\DeclarePairedDelimiterX{\divx}[2]{(}{)}{%
  #1\;\delimsize\|\;#2%
}
\newcommand{\var}{w}
\newcommand{\atoms}{S}
\newcommand{\cohull}{W}
\newcommand{\showthmname}[1]{}
\newcommand\numberthis{\addtocounter{equation}{1}\tag{\theequation}}
\title{Approximate and Stochastic Greedy Optimization}
\author{
  Nan Ye \\
	QUT \& ACEMS\\
  \texttt{n.ye@qut.edu.au} \\
  \And
  Peter Bartlett\\
	UC Berkeley \& QUT \& ACEMS\\
  \texttt{bartlett@cs.berkeley.edu} \\
}
\begin{document}

\maketitle

\begin{abstract}
We consider two greedy algorithms for minimizing a convex function in a
bounded convex set:
an algorithm by \citet{jones1992simple} and the Frank-Wolfe (FW) algorithm.
We first consider approximate versions of these algorithms.
For smooth convex functions, we give sufficient conditions for convergence,
a unified analysis for the well-known convergence rate of $O(1/k)$ together
with a result showing that this rate is the best obtainable from the
proof technique, and an equivalence result for the two algorithms. 
We also consider approximate stochastic greedy algorithms for minimizing expectations.
We show that replacing the full gradient by a single stochastic gradient can
fail even on smooth convex functions.
We give a convergent approximate stochastic Jones algorithm and a convergent
approximate stochastic FW algorithm for smooth convex functions.
In addition, we give a convergent approximate stochastic FW algorithm
for nonsmooth convex functions.
Convergence rates for these algorithms are given and proved.
\end{abstract}

\section{Introduction}

Consider the following problem of minimizing a convex function over a convex set,
\begin{align}
  \min_{\var \in \cohull} f(\var),
\end{align}
where $\cohull$ is the convex hull of a set of atoms $\atoms$ in a linear
vector space.
Such problem occurs frequently in machine learning and engineering
\citep{boyd2004convex}.
We consider greedy algorithms which starts with some 
$\var_1 \in \cohull$, and then iteratively find 
$\var_{k+1} = (1 - \eta_{k}) \var_{k} + \eta_{k} d_{k}$, where 
$\eta_{k}$ and/or $d_{k} \in \atoms$ are greedily chosen according to certain
criterion.
An attractive feature of such algorithm is that the iterates are sparse,
because each iteration adds at most one new atom in $\atoms$.

Two greedy algorithms are well-known:
an algorithm originally studied by \citet{jones1992simple},
and the Frank-Wolfe (FW) algorithm \citep{frank1956algorithm}.
Jones' algorithm chooses
\begin{align*}
	(\eta_{k}, d_{k}) 
	= \argmin_{\eta \in [0,1], d \in \atoms} f(\eta \var_{k} + (1 - \eta) d).
\end{align*}
This has been studied in various contexts,
such as function approximation in the Hilbert space
\citep{jones1992simple,barron1993universal,lee1996efficient}, 
$\ell_p$ regression \citep{donahue1997rates}, 
density estimation \citep{li1999mixture}, and is closely related to
boosting \citep{zhang2003sequential}.
The FW algorithm chooses
\begin{align*} 
	d_{k} = \argmin_{d \in \atoms} \grad f(\var_{k})^{\top} d,
\end{align*}
and chooses $\eta_k$ by line search or a priori.
The FW algorithm has recently attracted significant interest due to
its projection-free property and the ability to handle structural constraints
\citep{jaggi2013revisiting}.
In contrast to solving quadratic programs for projection in projected gradient
descent and for the proximal map in the proximal algorithms, the FW
algorithm solves a linear program at each step, which is often computationally
more tractable \citep{jaggi2010simple,lacoste2013affine}.
Approximate versions of Jones' algorithm and the FW algorithm have also been 
studied, for example, see \citep{zhang2003sequential,jaggi2013revisiting}.

In this paper, we first consider approximate versions of Jones' algorithm and
the FW algorithm, with a more general approximate version for Jones'
algorithm.
We focus on smooth convex functions in our analysis, and give a sufficient
convergence condition for both algorithms\showthmname{thm:converge}.
Building on previous results on the $O(1/k)$ convergence rates for both
algorithms, we present a unified analysis for the $O(1/k)$ convergence
rate\showthmname{thm:rate}, and also show that this is the optimal that can be
obtained with the proof technique\showthmname{thm:optimal}.
We also show that the approximate Jones' algorithm and the approximate FW
algorithm are equivalent\showthmname{thm:equiv}.

We then consider stochastic versions of these approximate greedy algorithms
for the stochastic approximation problem, where $f$ is an expectation $\E
f_{z}(\var)$ over some random variable $z$.
We show that some stochastic versions fail even on smooth convex functions.
We give an approximate stochastic Jones algorithm that has error $\epsilon$
using $O(\epsilon^{-4})$ random $f_{z}(\var)$ for smooth convex functions.
We also give an approximate stochastic FW algorithm that has an error
$\epsilon$ using $O(\epsilon^{-4})$ stochastic gradients and
$O(\epsilon^{-2})$ linear optimizations.
In addition, we give an approximate stochastic Frank-Wolfe algorithm that has
error $\epsilon$ using $O(\epsilon^{-4})$ stochastic gradients for nonsmooth
convex functions.
The algorithms also apply to the finite-sum setting where 
$f(\var) = \frac{1}{n} \sum_{i=1}^n f_i(\var)$.
The finite-sum form occurs when performing empirical risk minimization in
machine learning, or when performing M-estimation in statistics.
In both cases, each $f_i$ measures how well a model fits an example.

Stochastic algorithms originated in the 1950s \citep{robbins1951stochastic},
and have attracted much interest in recent years, mainly due to its ability
to scale up to large datasets.
We note that stochastic FW algorithms have recently been considered for
smooth functions by \citet{reddi2016stochastic-b} and
\citet{hazan2016variance}.
\citet{reddi2016stochastic-b} considered the non-convex setting, and shows
that one can achieve an error of $\epsilon$ with $O(\epsilon^{-4})$ stochastic
gradients and $O(\epsilon^{-2})$ linear optimizations.
When $f$ is a finite sum, the number of stochastic gradients needed can be
reduced to $O(n + n^{1/3} \epsilon^{-2})$. 
\citet{hazan2016variance} considered the convex setting, and showed that one
can achieve an error of $\epsilon$ with $O(\ln \epsilon^{-1})$ full gradients,
$O(\epsilon^{-2})$ stochastic gradients, and $O(\epsilon^{-1})$ linear
optimizations.
The number of stochastic gradients can be reduced to $O(\ln \epsilon^{-1})$
if $f$ is strongly-convex.
Both works use recent variance reduction techniques in convex optimization,
such as the works of
\citet{johnson2013accelerating,mahdavi2013mixed,defazio2014saga}.
\citet{hazan2016variance} additionally uses 
\citet{nesterov1983method}'s acceleration technique.
They use exact greedy steps, instead of approximate greedy steps as in this
paper.

For the non-stochastic case, faster rates for FW are known with additional
assumptions \citep{lacoste2015global,garber2015faster,garber2016linearly}.
We refer the readers to the works of \citet{hazan2016variance} and
\citet{reddi2016stochastic-b} for further related works.

\section{Approximate Greedy Optimization}
We consider the approximate Jones' algorithm in \Cref{alg:greedy}.
At each iteration, the algorithm solves the optimization problem
$\min_{d \in \atoms} f\left((1 - \eta_{k}) \var_{k} + \eta_{k} d\right)$
with an error of $\epsilon_{k} \eta_{k}$.
We call this an $\epsilon_{k}$-approximate Jones' algorithm, and we say the algorithm is a
$c$-Jones algorithm if there is a constant $c \ge 0$ such that
$\epsilon_{k} \le c \eta_{k}$ for all $k$.

\begin{algorithm}[h!]
\caption{Approximate Jones' Algorithm}
\label{alg:greedy}
\begin{algorithmic}
\State Choose $\var_0 \in \cohull$.
\For{$k = 0, 1, 2, \ldots$}
	\State Choose $(\eta_{k}, d_{k}) \in [0, 1] \times \atoms$ at iteration $k$ such that
		\begin{align}
			f\left((1 - \eta_{k}) \var_{k} + \eta_{k} d_{k} \right) \le \min_{d \in \atoms} 
				f\left((1 - \eta_{k}) \var_{k} + \eta_{k} d\right) + \epsilon_{k} \eta_k.
		\end{align}
	\State $\var_{k+1} = (1 - \eta_{k}) \var_{k} + \eta_{k} d_{k}$.
\EndFor
\end{algorithmic}
\end{algorithm}

We leave the choice of $\eta_{k}$ unspecified,
and thus this includes algorithms which fix $\eta_{k}$ a priori,
or choose $\eta_{k}$ and $d_{k}$ jointly at each iteration.
Similarly, $\epsilon_{k}$ may be chosen a priori or chosen adaptively.

An algorithm is called an $\epsilon_{k}$-approximate FW
algorithm, if given $\var_{k} \in \cohull$, the algorithm yields 
$(\eta_{k}, d_{k}) \in [0, 1] \times \atoms$ such that
\begin{align}
  \grad f(\var_{k})^\top d_{k} \le \min_{d \in \atoms} \grad f(\var_{k})^\top d + \epsilon_{k},
\end{align}
and we say the algorithm is a $c$-FW algorithm for some $c \ge 0$ if
$\epsilon_{k} \le c \eta_{k}$.

\subsection{Assumptions}
In this section, we assume $f$ is convex with bounded curvature, that is,
\begin{align}
	f(\var') \ge f(\var) + \grad f(\var)^{\top} (\var' - \var), 
		\quad\text{for all $\var', \var \in \cohull$,}
		&&& \text{(convexity)}\\
	\sup_{\var \in \cohull, d \in \atoms, \eta \in (0,1)} 
    \frac{2}{\eta^2} D_f\left((1-\eta) \var  + \eta d, \var\right) 
		< \infty,
		&&& \text{(bounded curvature)}
\end{align}
where $D_f(\var, y) = f(\var) - f(y) - \grad f(y) (\var - y)$ is the Bregman
divergence of $f$, and the LHS of the second equation is called the
\emph{curvature} of $f$ in $\cohull$.
This definition of curvature is the same as that in
\citep{jaggi2013revisiting}, except that \citet{jaggi2013revisiting} takes
supremum over $d \in \cohull$.
If $f$ is $L$-smooth, that is,
$\norm{\grad f(\var') - \grad f(\var)}_2 \le L \norm{\var' - \var}_2$ for all
$\var', \var \in \cohull$,
then the curvature of $f$ is not more than $L \diam(S)^2$.
Thus a smooth function has bounded curvature.
The curvature of $f$ is also not more than
  $\sup_{\var \in \cohull, d \in \atoms, \eta \in (0,1)} 
		\frac{\partial f((1-\eta) \var + \eta d)}{\partial \eta^2}$, 
assuming the second-order derivative exists.

The following are two basic bounds needed in our analysis.
\begin{lem}
(a) (Duality bound) If $f$ is convex on $\cohull$, 
$\var^* = \argmin_{\var \in \cohull} f(\var)$, then
for any $\var \in \cohull$, 
\begin{align}
  f(\var) - f(\var^*) 
		\le \max_{d \in \atoms} f(\var)^\top (\var - d). \label{eq:duality}
\end{align}
(b) (Curvature inequality) If $f$ has curvature at most $M$, then for any $\var \in \cohull$, 
$d \in \atoms$, $\eta \in [0, 1]$,
\begin{align}
  f\left((1 - \eta) \var + \eta d\right) 
    &\le f(\var) + \eta \grad f(\var)^\top (d - \var) + \frac{M}{2} \eta^2.
		\label{eq:curvature}
\end{align}
\end{lem}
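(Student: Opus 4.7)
Both parts are essentially unpacking of the definitions, so the plan is short and no step should be a real obstacle.

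For part (a), the plan is to combine convexity with the representation of $\var^*$ as a convex combination of atoms. First I would apply the convexity inequality at $\var$ to $\var^*$, which rearranges to
\begin{align*}
f(\var) - f(\var^*) \le \grad f(\var)^\top (\var - \var^*).
\end{align*}
Since $\var^* \in \cohull = \co(\atoms)$, it admits a representation $\var^* = \sum_i \alpha_i d_i$ with $d_i \in \atoms$, $\alpha_i \ge 0$, and $\sum_i \alpha_i = 1$. Substituting and using linearity of the inner product gives $\grad f(\var)^\top (\var - \var^*) = \sum_i \alpha_i \grad f(\var)^\top (\var - d_i)$, which is a convex combination of the quantities $\grad f(\var)^\top (\var - d_i)$ and hence bounded above by $\max_{d \in \atoms} \grad f(\var)^\top (\var - d)$.

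For part (b), the plan is a direct substitution into the curvature bound. The curvature-at-most-$M$ hypothesis gives, for any $\var \in \cohull$, $d \in \atoms$, $\eta \in (0,1)$,
\begin{align*}
D_f\bigl((1-\eta)\var + \eta d,\; \var\bigr) \le \tfrac{M}{2}\eta^2,
\end{align*}
and the boundary cases $\eta \in \{0,1\}$ are handled trivially or by continuity. Expanding the Bregman divergence via its definition $D_f(x,y) = f(x) - f(y) - \grad f(y)^\top (x-y)$, with $x = (1-\eta)\var + \eta d$ and $y = \var$, and noting $x - y = \eta(d - \var)$, rearrangement yields exactly the claimed inequality.

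The only mild thing worth flagging is the restriction $d \in \atoms$ (as opposed to $d \in \cohull$) in the curvature definition used here; since part (b) is also stated only for $d \in \atoms$, no extension is needed. There is no real obstacle in either part.
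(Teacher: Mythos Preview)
Your proposal is correct and follows essentially the same approach as the paper's proof: convexity followed by the observation that a linear functional over $\cohull$ is maximized at an atom for part (a), and direct expansion of the Bregman divergence combined with the curvature bound for part (b). The only cosmetic difference is that for (a) you write $\var^*$ explicitly as a convex combination of atoms, whereas the paper passes through $\max_{d \in \cohull}$ before noting this equals $\max_{d \in \atoms}$; both arguments encode the same fact.
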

\begin{proof}
(a) Using the definitions, we have
	\begin{align*}
		f(\var) - f(\var^*) 
     \le \grad f(\var)^\top (\var - \var^*) 
     \le \max_{d \in \cohull} f(\var)^\top (\var - d)
    \le \max_{d \in \atoms} f(\var)^\top (\var - d).
	\end{align*}

(b) From the definition of Bregman divergence, we have 
\begin{align*}
  f\left((1 - \eta) \var + \eta d\right) 
  = f(\var) + \eta \grad f(\var)^\top (d - \var) 
  + D_f\left((1 - \eta) \var + \eta d \right).
\end{align*}
Apply the definition of curvature, then the desired inequality follows.
\end{proof}

In general, we cannot improve the quadratic term to a higher-order one in the
curvature inequality.
For example, if $f$ is $m$-strongly convex, then we can show that
$\sup_{\var \in \cohull, d \in \atoms, \eta \in (0,1)} 
\frac{1}{\eta^3} D_f\left((1-\eta) \var  + \eta d, \var\right)$ is infinity.

\subsection{A Sufficient Condition for Convergence} 

The core to our convergence analysis for Jones' algorithm and the FW
algorithm is the following recurrence equation for the
error $e_{k} = f(\var_{k}) - f(\var^*)$.

\begin{restatable}{lem}{lemapproxrecurrence} \label{lem:approx-recurrence}
Let $f$ be convex with curvature at most $M$.
Then for both $\epsilon_{k}$-approximate Jones' algorithm and
$\epsilon_{k}$-approximate FW algorithm the error 
$e_{k} = f(\var_{k}) - f(\var^*)$ satisfies
\begin{align}
	e_{k+1} \le (1 - \eta_k) e_{k} + \epsilon'_{k},
\end{align}
where $\epsilon'_{k} = \eta_k \epsilon_k + \frac{M}{2} \eta_k^2$.
\end{restatable}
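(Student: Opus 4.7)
The plan is to handle both algorithms essentially in parallel, using the curvature inequality and the duality bound from the previous lemma. In both cases the goal is to bound $f(\var_{k+1}) - f(\var_k)$ above by something of the form $-\eta_k e_k + \eta_k \epsilon_k + \tfrac{M}{2}\eta_k^2$, which rearranges to the claimed recurrence after subtracting $f(\var^*)$ from both sides.

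For the approximate FW case I would start by applying the curvature inequality \eqref{eq:curvature} directly to $\var_{k+1} = (1-\eta_k)\var_k + \eta_k d_k$, giving
\begin{align*}
f(\var_{k+1}) \le f(\var_k) + \eta_k \grad f(\var_k)^\top(d_k - \var_k) + \tfrac{M}{2}\eta_k^2.
\end{align*}
The $\epsilon_k$-approximate FW condition lets me replace $\grad f(\var_k)^\top d_k$ with $\min_{d \in \atoms}\grad f(\var_k)^\top d + \epsilon_k$, and then the duality bound \eqref{eq:duality} yields $\grad f(\var_k)^\top(d^* - \var_k) \le -e_k$ where $d^*$ is the exact minimizer. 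Combining gives $\grad f(\var_k)^\top(d_k - \var_k) \le -e_k + \epsilon_k$, and the recurrence follows.

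For the approximate Jones case the only wrinkle is that the approximation guarantee is stated in terms of function values, not gradient inner products. I would first use the defining inequality of \Cref{alg:greedy} to write $f(\var_{k+1}) \le \min_{d \in \atoms} f((1-\eta_k)\var_k + \eta_k d) + \epsilon_k \eta_k$, then apply the curvature inequality \eqref{eq:curvature} inside the minimum (pointwise in $d$, so the bound survives the min), obtaining
\begin{align*}
\min_{d \in \atoms} f((1-\eta_k)\var_k + \eta_k d)
\le f(\var_k) + \eta_k \min_{d \in \atoms}\grad f(\var_k)^\top(d - \var_k) + \tfrac{M}{2}\eta_k^2.
\end{align*}
The minimum on the right is $-\max_{d\in\atoms}\grad f(\var_k)^\top(\var_k - d) \le -e_k$ by the duality bound, and the argument closes identically.

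I do not expect a real obstacle here; the only bookkeeping point is that the Jones algorithm's approximation slack is already scaled by $\eta_k$ in its definition (contributing $\eta_k \epsilon_k$), whereas in FW the slack $\epsilon_k$ on the linear objective gets multiplied by $\eta_k$ through the step size, again producing $\eta_k \epsilon_k$. So both end up with the same error term $\epsilon'_k$, which is what makes the unified statement possible.
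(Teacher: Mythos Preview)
Your proposal is correct and follows essentially the same route as the paper: for FW you apply the curvature inequality first, then the approximate-linear-oracle condition, then the duality bound; for Jones you apply the algorithm's defining inequality, then the curvature inequality inside the minimum, then the duality bound. Your closing remark about how the $\eta_k$ scaling of the slack arises differently in the two algorithms but yields the same $\epsilon'_k$ is exactly the point.
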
 
We omit the proof of this lemma and a few other proofs in the main text, but
put them in the supplementary material, due to space limit.

The above lemma leads to a general convergence result for Jones' algorithm and
the FW algorithm.
\begin{restatable}{thm}{converge} \label{thm:converge}
Let $f$ be convex with curvature at most $M$.
For an $\epsilon_{k}$-approximate Jones' algorithm or an
$\epsilon_{k}$-approximate FW algorithm, if
$\eta_{k}$'s and $\epsilon_{k}$'s are chosen such that $\sum_{k} \eta_{k}$
diverges, $\eta_{k} \to 0$ and $\epsilon_{k} \to 0$ as $k \to \infty$, then 
$f(\var_{k}) \to f(\var^*)$ as $k \to \infty$.
\end{restatable}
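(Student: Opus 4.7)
My plan is to derive \Cref{thm:converge} directly from \Cref{lem:approx-recurrence} by unrolling the recurrence and bounding each piece. Writing $\epsilon'_k = \eta_k(\epsilon_k + \tfrac{M}{2}\eta_k)$, the hypotheses $\eta_k \to 0$ and $\epsilon_k \to 0$ give $\epsilon'_k/\eta_k \to 0$, so the problem reduces to the following standard lemma: if $e_{k+1} \le (1-\eta_k) e_k + \delta_k$ with $\eta_k \in [0,1]$, $\sum_k \eta_k = \infty$, and $\delta_k/\eta_k \to 0$, then $e_k \to 0$. Note that $e_k \ge 0$ automatically since $w^*$ is a minimizer, so it is enough to show $\limsup_k e_k \le \alpha$ for every $\alpha > 0$.

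I would unfold the recurrence to obtain
\begin{align*}
e_{k+1} \le e_0 \prod_{j=0}^{k}(1-\eta_j) + \sum_{i=0}^{k} \delta_i \prod_{j=i+1}^{k}(1-\eta_j),
\end{align*}
with the convention that an empty product equals $1$. Using $1-\eta_j \le e^{-\eta_j}$, the product $\prod_{j=0}^{k}(1-\eta_j) \le \exp(-\sum_{j=0}^{k} \eta_j) \to 0$ by the divergence hypothesis, so the first term vanishes.

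For the sum, fix $\alpha > 0$ and choose $N$ large enough that $\delta_i \le \alpha \eta_i$ for all $i \ge N$. Split the sum at $N$. On indices $i < N$, the tail product $\prod_{j=i+1}^{k}(1-\eta_j) \to 0$ as $k \to \infty$ for each fixed $i$, so this finite sum of $N$ terms tends to $0$. On indices $i \ge N$, use the telescoping identity $\eta_i \prod_{j=i+1}^{k}(1-\eta_j) = \prod_{j=i+1}^{k}(1-\eta_j) - \prod_{j=i}^{k}(1-\eta_j)$, which gives
\begin{align*}
\sum_{i=N}^{k} \delta_i \prod_{j=i+1}^{k}(1-\eta_j) \le \alpha \sum_{i=N}^{k} \eta_i \prod_{j=i+1}^{k}(1-\eta_j) = \alpha\Bigl(1 - \prod_{j=N}^{k}(1-\eta_j)\Bigr) \le \alpha.
\end{align*}
Therefore $\limsup_{k\to\infty} e_{k+1} \le \alpha$, and since $\alpha$ is arbitrary and $e_k \ge 0$, we conclude $e_k \to 0$, which is the claim for both the $\epsilon_k$-approximate Jones' and FW algorithms.

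The main obstacle is purely technical bookkeeping rather than conceptual: one must be careful to separate the ``burn-in'' indices $i < N$, where $\delta_i$ has no useful bound, from the tail, and to verify the telescoping identity to conclude that the weighted sum $\sum_i \eta_i \prod_{j>i}(1-\eta_j)$ is bounded by $1$ independently of $k$. Once that telescoping step is in place the rest is straightforward, and the argument is identical for the Jones and FW variants because \Cref{lem:approx-recurrence} produces the same recurrence for both.
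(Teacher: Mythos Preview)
Your proof is correct. Both you and the paper start from \Cref{lem:approx-recurrence} and reduce to showing that any nonnegative sequence satisfying $e_{k+1}\le(1-\eta_k)e_k+\epsilon'_k$ with $\eta_k\in[0,1]$, $\sum_k\eta_k=\infty$, and $\epsilon'_k/\eta_k\to 0$ must tend to zero, but the arguments diverge from there. You unfold the recurrence explicitly into a product term plus a convolution sum, kill the product with $1-\eta_j\le e^{-\eta_j}$ and the divergence hypothesis, and control the convolution by splitting at a burn-in index $N$ and using the telescoping identity $\eta_i\prod_{j>i}(1-\eta_j)=\prod_{j>i}(1-\eta_j)-\prod_{j\ge i}(1-\eta_j)$ to bound the tail by $\alpha$. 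The paper instead runs a direct ``trapping'' argument: for any $\delta\in(0,1)$ it picks $K$ so that $\eta_k<\delta$ and $\epsilon'_k/\eta_k<\delta/2$ for $k>K$, observes that while $e_k>\delta$ one has $e_{k+1}\le e_k-\eta_k\delta/2$, uses $\sum_k\eta_k=\infty$ to force $e_N\le\delta$ for some $N$, and then a two-case induction shows $e_k\le\delta$ for all $k\ge N$. Your route gives an explicit non-asymptotic upper bound on $e_{k+1}$ in terms of all past data, at the price of the telescoping bookkeeping; the paper's route avoids products and sums entirely and makes the role of each hypothesis very transparent (divergence drives descent; smallness of $\eta_k$ and $\epsilon'_k/\eta_k$ traps the sequence). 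Both are standard devices for Robbins--Monro-type recurrences, and either is perfectly adequate here.
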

\begin{proof}
From \Cref{lem:approx-recurrence}, it suffices to show that under the given
conditions on $\eta_{k}$ and $\epsilon_{k}$, the solution to the recurrence
equation $e_{k+1} \le (1 - \eta_{k}) e_{k} + \epsilon'_{k}$ satisfies 
$e_{k} \to 0$.

For any $\delta$ such that $0 < \delta < 1$, there exists $K$ such that
for all $k > K$, we have 
	$\eta_{k} < \delta$, 
	$\epsilon'_{k} / \eta_{k} < \delta / 2$, 
because $\eta_{k} \to 0$ and $\epsilon_{k} / \eta_{k} \to 0$.
For any $k > K$, if $e_{k} > \delta$, then we have
\begin{align*}
  e_{k+1} \le e_{k} + \eta_{k} (\epsilon'_{k} / \eta_{k} - e_{k})
    \le e_{k} - \eta_{k} \delta / 2. 
\end{align*}
Since $\sum_{k} \eta_{k}$ diverges, thus if $e_{k} > \delta$, then there exists
$N > K$ such that $e_{N} \le \delta$.
We show by induction that all $k \ge N$, we have $e_{k} \le \delta$.
This is true for $k = N$.
For the inductive case, assume $e_{k} \le \delta$.
If $e_{k} \ge \delta / 2$, then $\epsilon'_{k}/\eta_{k} - e_{k} \le 0$, and
thus
$e_{k+1} \le e_{k} + \eta_{k} ( \epsilon'_{k}/\eta_{k} - e_{k}) \le \delta$.
If $e_{k} \le \delta / 2$, then
$e_{k+1} \le \frac{\delta}{2} + \delta ( \frac{\delta}{2} - 0) \le \delta$.
We have thus proved that for any $\delta > 0$, there exists $N$ such that for
all $k \ge N$, $e_{k} \le \delta$.
Thus $e_{k} \to 0$ as $k \to \infty$.
\end{proof}

\subsection{Convergence Rate}
We now show that with proper choices of $\eta_{k}$'s and $\epsilon_{k}$'s, we can
obtain a convergence rate of $O(1/k)$ for Jones' algorithm and the FW algorithm.
\begin{restatable}{thm}{thmrate} \label{thm:rate}
Let $f$ be convex with curvature at most $M$,
$\eta_{k} = \frac{2}{k+2}$ for $k \ge 0$.
Then for the iterates $(\var_{k})$ obtained
using a $c$-Jones algorithm or a $c$-FW algorithm, when $k \ge 1$,
\begin{align}
  f(\var_{k}) - f(\var^*) \le \frac{2M+4c}{k+2}.
\end{align}
\end{restatable}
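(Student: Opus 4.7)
The plan is to combine the recurrence from \Cref{lem:approx-recurrence} with the $c$-approximation assumption and then run induction on $k$ using the specific step-size schedule $\eta_k = 2/(k+2)$. Since the proof obstruction is minimal and the analysis is essentially bookkeeping, I would not expect any conceptual surprises; the work is in choosing the right inductive hypothesis and checking the arithmetic.

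First I would instantiate the bound $\epsilon_k \le c \eta_k$ inside the expression $\epsilon'_k = \eta_k \epsilon_k + \tfrac{M}{2} \eta_k^2$ from \Cref{lem:approx-recurrence}, yielding $\epsilon'_k \le (c + M/2) \eta_k^2$. Writing $C = 2c + M$ for brevity, so that the target bound reads $e_k \le 2C/(k+2)$, the recurrence becomes
\begin{align*}
e_{k+1} \le (1 - \eta_k) e_k + \tfrac{C}{2} \eta_k^2.
\end{align*}

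Next I would substitute $\eta_k = 2/(k+2)$, which gives $1 - \eta_k = k/(k+2)$ and $\tfrac{C}{2}\eta_k^2 = 2C/(k+2)^2$. The base case $k = 1$ follows immediately: since $\eta_0 = 1$, the recurrence at $k=0$ collapses to $e_1 \le \epsilon'_0 \le C/2 \le 2C/3$. For the inductive step, assuming $e_k \le 2C/(k+2)$, the recurrence yields
\begin{align*}
e_{k+1} \le \frac{k}{k+2} \cdot \frac{2C}{k+2} + \frac{2C}{(k+2)^2} = \frac{2C(k+1)}{(k+2)^2}.
\end{align*}
It then remains to verify that $2C(k+1)/(k+2)^2 \le 2C/(k+3)$, which reduces to the obvious algebraic inequality $(k+1)(k+3) \le (k+2)^2$.

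The only subtlety is that the Jones' and FW algorithms are analyzed simultaneously; this is handled entirely by appealing to \Cref{lem:approx-recurrence}, which already unifies them. The main ``obstacle,'' such as it is, is to notice that the natural inductive constant is $2C = 4c + 2M$, exactly matching the numerator $2M + 4c$ appearing in the statement. Once the induction is set up with the right constant, there is no remaining work beyond the elementary check above.
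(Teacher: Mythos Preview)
Your proposal is correct and follows essentially the same approach as the paper: invoke \Cref{lem:approx-recurrence}, use $\epsilon_k \le c\eta_k$ to bound $\epsilon'_k \le (c + M/2)\eta_k^2$, and run induction on the resulting recurrence with $\eta_k = 2/(k+2)$, checking the same algebraic inequality $(k+1)(k+3) \le (k+2)^2$. The only cosmetic difference is your choice of $C = 2c + M$ (so the target reads $2C/(k+2)$) versus the paper's $C = 2M + 4c$ (target $C/(k+2)$); the arguments are otherwise identical.
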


The constant in the rate can be improved in some cases.
For example, if the minimizer is an algebraic interior point, then we can
get a smaller constant using an argument similar to that in
\citep{zhang2003sequential}.

A careful look at the analysis shows that if our update rule is guaranteed to
generate a new iterate that is not more than that generated by a $c$-FW
algorithm or a $c$-Jones algorithm with step size $\eta_k = 2/(k+2)$, then we
can get an $O(1/k)$ convergence rate.
This also implies that we can mix $c$-FW steps and $c$-Jones steps to get an
$O(1/k)$ convergence rate.
In addition, we can obtain the following result from \citet{zhang2003sequential} as
a special case.

\begin{cor}
Let $f$ be convex with curvature at most $M$.
If $\var_{k+1} \in \cohull$ is chosen such that 
\begin{align*} 
	f(\var_{k+1})
	\le \min_{\eta \in [0,1], d \in \atoms} 
	f\left((1 - \eta) \var_{k} + \eta d\right) + \frac{4c}{(k+2)^2},
\end{align*}
where $c > 0$ is some constant, then for $k \ge 1$, we have
$f(\var_{k}) - f(\var^*) \le \frac{2M + 4c}{k+2}$.
\end{cor}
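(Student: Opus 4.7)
The plan is to reduce this corollary to \Cref{thm:rate} by recognizing that the hypothesized update rule, although it does not explicitly commit to a step size and direction, is at least as good as a $c$-Jones step with the schedule $\eta_k = \frac{2}{k+2}$. The hint in the paragraph just before the corollary already announces this: any update that improves on a $c$-Jones or $c$-FW iterate with $\eta_k = 2/(k+2)$ inherits the $O(1/k)$ rate.

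Concretely, I would fix $\eta_k = \frac{2}{k+2}$ and restrict the outer minimum in the hypothesis to this single $\eta$, obtaining
\[
  f(\var_{k+1}) \le \min_{d \in \atoms} f\bigl((1-\eta_k)\var_k + \eta_k d\bigr) + \frac{4c}{(k+2)^2}
  = \min_{d \in \atoms} f\bigl((1-\eta_k)\var_k + \eta_k d\bigr) + (c\eta_k)\,\eta_k,
\]
which is precisely the defining inequality of an $\epsilon_k$-approximate Jones step with $\epsilon_k = c\eta_k$, hence a $c$-Jones step. Invoking \Cref{lem:approx-recurrence} then yields the recurrence
\[
  e_{k+1} \le (1-\eta_k)\,e_k + \eta_k \epsilon_k + \tfrac{M}{2}\eta_k^2
  = \Bigl(1 - \tfrac{2}{k+2}\Bigr) e_k + \frac{2M + 4c}{(k+2)^2},
\]
and \Cref{thm:rate} solves exactly this recurrence with this step-size schedule, giving the stated bound $e_k \le (2M+4c)/(k+2)$ for $k \ge 1$.

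The only subtlety that requires care is that the corollary's hypothesis gives only a function-value bound on $\var_{k+1}$ and does not guarantee the explicit convex-combination form $\var_{k+1} = (1-\eta_k)\var_k + \eta_k d_k$ used in \Cref{alg:greedy}. I expect this to be a non-obstacle: inspecting the derivation of \Cref{lem:approx-recurrence}, one sees that the recurrence is proved by combining the curvature inequality and the duality bound applied to the inner minimum, and it never uses the specific algebraic form of $\var_{k+1}$ beyond the function-value inequality. Once this is noted, the proof collapses to the two-line reduction above.
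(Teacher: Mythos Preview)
Your proposal is correct and matches the paper's intended argument: the corollary is stated without a separate proof precisely because, as the preceding paragraph explains, any update whose function value is no larger than that of a $c$-Jones (or $c$-FW) step with $\eta_k = 2/(k+2)$ inherits the recurrence of \Cref{lem:approx-recurrence} and hence the bound of \Cref{thm:rate}. Your observation that the derivation uses only the function-value inequality (not the explicit convex-combination form of $\var_{k+1}$) is exactly the ``careful look at the analysis'' the paper refers to.
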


The key idea in the above analysis is to show that 
$e_{k+1} \le (1 - \eta_{k}) e_{k} + C \eta_{k}^2$,
and then use induction to show that $e_{k} \in O(\frac{1}{k})$ when
$\eta_{k} = \frac{2}{k+2}$.
Can we tune $\eta_{k}$ to obtain a bound $O(\frac{1}{k^p})$ for some $p > 1$?
It turns out that $p = 1$ is the best obtainable.
\begin{thm} \label{thm:optimal}
Consider a sequence $(e_{k})$ satisfying 
\begin{align}
  e_{k+1} = (1 - \eta_{k}) e_{k} + C \eta_{k}^2,
\end{align}
with $e_0 \le 2C$, then for any choice of $\eta_{k}$, we have 
$e_{k} \ge \frac{a}{k+2}$ for $a = \min\{e_0, C\}$.
\end{thm}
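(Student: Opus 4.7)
The plan is to prove $e_k \ge a/(k+2)$ by induction on $k$. The key observation is that, viewing the update as a convex quadratic in $\eta_k$, its unconstrained minimum is
\[
h(e_k) := e_k - \frac{e_k^2}{4C},
\]
attained at $\eta_k = e_k/(2C)$. So regardless of the choice of $\eta_k$ (taken in $[0,1]$, as inherited from the underlying algorithms), we always have $e_{k+1} \ge h(e_k)$.

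First I would establish the invariant $0 \le e_k \le 2C$. Since the quadratic in $\eta_k$ is convex, its maximum on $[0,1]$ is attained at an endpoint, giving $e_{k+1} \le \max(e_k, C) \le 2C$ whenever $e_k \le 2C$; the base case is the hypothesis $e_0 \le 2C$. Nonnegativity of $e_k$ then follows from $h(x) \ge 0$ for $x \in [0, 2C]$. This invariant is critical because it keeps each $e_k$ in the region where $h'(x) = 1 - x/(2C) \ge 0$, so that $h$ is monotone increasing.

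The induction is then short. The base case $k = 0$ requires $e_0 \ge a/2$, which holds since $a = \min(e_0, C) \le e_0$. For the inductive step, assuming $e_k \ge a/(k+2)$, monotonicity of $h$ on $[0, 2C]$ gives
\[
e_{k+1} \;\ge\; h\!\left(\tfrac{a}{k+2}\right) \;=\; \frac{a}{k+2}\left(1 - \frac{a}{4C(k+2)}\right).
\]
A short algebraic rearrangement shows this dominates $a/(k+3)$ iff $a(k+3) \le 4C(k+2)$, which holds since $a \le C$ and $(k+3)/(k+2) \le 3/2$ for $k \ge 0$.

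The main obstacle is really just tracking the invariant $e_k \le 2C$: without it, $h$ loses monotonicity and eventually turns negative past $4C$, which would break the inductive step. The hypothesis $e_0 \le 2C$ is used precisely to seed this invariant.
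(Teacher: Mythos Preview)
Your proof is correct and follows the same line as the paper's: both minimize the quadratic in $\eta_k$ to obtain $e_{k+1} \ge e_k\bigl(1 - e_k/(4C)\bigr)$, then use monotonicity of this map on $[0,2C]$ together with the inductive hypothesis to close the induction, with the final algebraic check reducing to $a(k+3) \le 4C(k+2)$ via $a \le C$. Your treatment of the invariant $e_k \le 2C$ via the convex-max-at-endpoint argument (making explicit the assumption $\eta_k \in [0,1]$) is in fact crisper than the paper's, which reaches the same conclusion through a somewhat elliptical remark about the minimizing sequence being decreasing.
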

\begin{proof}
Clearly $e_0 \ge \frac{a}{2}$ holds.
Now we show by induction that if $e_{k} \ge \frac{a}{k+2}$, then 
$e_{k+1} \ge \frac{a}{k+3}$.
Note that $(1 - \eta_{k}) e_{k} + C \eta_{k}^2$ is minimized when 
$\eta_{k} = \frac{e_{k}}{4C}$, with minimum value $e_{k} (1 - \frac{e_{k}}{4C})$,
which is an increasing function of $e_{k}$ when 
$e_{k} \in [\frac{a}{k+2}, 2C]$.
This implies that when $\eta_{k}$'s are chosen to minimize $e_{k}$'s, then $e_{k}$'s
form a decreasing sequence.
Since $e_0 \le 2C$, this also implies the minimum $e_{k} \le 2C$.
Hence we have
\begin{align*}
  e_{k+1} 
    \ge e_{k} \left(1 - \frac{e_{k}}{4C}\right)
    \ge \frac{a}{k+2} \left(1 - \frac{a/4C}{k+2}\right)
    \ge \frac{a(k+2 - a/4C)}{(k+2)^2}
    \ge \frac{a}{k+3},
\end{align*}
where the last inequaliy holds because
\begin{align*} 
  (k+2 - a/4C)(k+3) 
  \ge (k + 2 - 1/4)(k+3)
  \ge k^2 + \frac{19}{4} k + \frac{21}{4}
  \ge (k+2)^2.
\end{align*}
\end{proof}

\subsection{An Equivalence Result}
We have already seen that a few results hold for both the approximate Jones'
algorithm and the approximate FW algorithm.
The following theorem shows that we can view these two algorithms as
equivalent algorithms.

\begin{restatable}{thm}{thmequiv} \label{thm:equiv}
Assume $f$ is convex with curvature at most $M$.
\begin{itemize}[leftmargin=1.5em]
\item[(a)] An $\epsilon_{k}$-approximate Jones' algorithm with step sizes $(\eta_{k})$ is 
$(\epsilon_{k} + \frac{M}{2} \eta_{k})$-FW.
In particular, a $c$-Jones algorithm is $\frac{M+2c}{2}$-FW with the same step sizes.
\item[(b)] An $\epsilon_{k}$-approximate FW algorithm with step sizes $(\eta_{k})$ is 
an $(\epsilon_{k} + \frac{M}{2} \eta_{k})$-approximate Jones' algorithm.
In particular, a $c$-FW algorithm is $\frac{M+2c}{2}$-Jones with the same step
sizes.
\end{itemize}
\end{restatable}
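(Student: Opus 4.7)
The plan is to derive both directions by sandwiching $f((1-\eta_k)\var_k + \eta_k d)$ between its linear approximation at $\var_k$, which comes from convexity as a lower bound, and the same linear approximation plus the $\frac{M}{2}\eta_k^2$ quadratic slack, which comes from the curvature inequality \eqref{eq:curvature} as an upper bound. Concretely, for every $d \in \atoms$ and $\eta_k \in [0,1]$ one has
\begin{align*}
f(\var_k) + \eta_k \grad f(\var_k)^\top (d - \var_k)
\le f((1-\eta_k)\var_k + \eta_k d)
\le f(\var_k) + \eta_k \grad f(\var_k)^\top (d - \var_k) + \tfrac{M}{2}\eta_k^2.
\end{align*}
Both halves of the theorem come from applying this sandwich at $d = d_k$ on one side and at an arbitrary competitor atom on the other.

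For part (a), I would apply the lower (convexity) bound at $d = d_k$, then invoke the Jones approximation condition $f((1-\eta_k)\var_k + \eta_k d_k) \le \min_{d \in \atoms} f((1-\eta_k)\var_k + \eta_k d) + \epsilon_k \eta_k$, and finally upper bound the right-hand side via the curvature inequality applied to an arbitrary $d \in \atoms$. After cancelling $f(\var_k)$ and dividing through by $\eta_k > 0$, the resulting inequality $\grad f(\var_k)^\top d_k \le \grad f(\var_k)^\top d + \epsilon_k + \tfrac{M}{2}\eta_k$ holds for every $d \in \atoms$, which is exactly the $(\epsilon_k + \tfrac{M}{2}\eta_k)$-FW condition.

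For part (b), I would reverse the roles. The FW condition yields $\grad f(\var_k)^\top d_k \le \min_{d \in \atoms} \grad f(\var_k)^\top d + \epsilon_k$; feeding this into the upper (curvature) bound at $d = d_k$ produces an estimate of $f((1-\eta_k)\var_k + \eta_k d_k)$ whose leading term is $f(\var_k) + \eta_k \min_{d \in \atoms} \grad f(\var_k)^\top (d - \var_k)$, with an additive slack of $\epsilon_k \eta_k + \tfrac{M}{2}\eta_k^2$. On the other side, applying the lower (convexity) bound at any minimizer $d^\star$ of $f((1-\eta_k)\var_k + \eta_k d)$ over $\atoms$ gives $\min_{d \in \atoms} f((1-\eta_k)\var_k + \eta_k d) \ge f(\var_k) + \eta_k \min_{d \in \atoms} \grad f(\var_k)^\top (d - \var_k)$. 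Subtracting these two estimates directly yields the $(\epsilon_k + \tfrac{M}{2}\eta_k)$-approximate Jones bound. The $c$-versions follow immediately from substituting $\epsilon_k \le c\eta_k$.

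No single step is hard; the only care needed is bookkeeping, namely consistently using convexity as a lower bound on $f((1-\eta_k)\var_k + \eta_k \cdot)$ and curvature as the matching upper bound, and swapping which of these is applied at $d_k$ versus at a competitor atom when moving from (a) to (b). The edge case $\eta_k = 0$ is a no-op for both algorithms, so dividing by $\eta_k$ in part (a) is legitimate on the non-trivial iterations.
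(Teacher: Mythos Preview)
Your proposal is correct and follows essentially the same approach as the paper's proof: both directions are obtained by chaining convexity (as a lower bound on $f((1-\eta_k)\var_k+\eta_k d)$) with the curvature inequality \eqref{eq:curvature} (as an upper bound), applied at $d_k$ on one side and at a competitor atom on the other, in the appropriate order. The only difference is presentational---you set up the sandwich explicitly at the outset, whereas the paper chains the three inequalities linearly---but the logical content and the resulting slack $\epsilon_k + \tfrac{M}{2}\eta_k$ are identical.
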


An immediate consequence of this result is that if any $c$-Jones algorithm
converges at $O(1/k)$ rate, then any $c$-FW algorithm converges at $O(1/k)$
rate too.

\section{Approximate Stochastic Greedy Optimization}

We consider approximate stochastic versions of Jones' algorithm and the FW
algorithm for optimizing a function $f(\var) = \E f_z(\var)$, where the
expectation is over a random variable.
Without loss of generality, we work with the finite-sum case where 
$f(\var) = \frac{1}{n} \sum_{i=1}^{n} f_i(\var)$ to ease presentation.

\vspace{-0.5em}
\subsection{Stochastic Jones' Algorithm}
A natural stochastic version of Jones' algorithm is
obtained by replacing the function $f$ with a sampled approximation
$\tilde{f}_{k}$ at iteration $k$.

\begin{algorithm}[h!]
	\caption{Approximate Stochastic Jones (ASJ)}  
	\label{alg:sg}
	\begin{algorithmic}
	\State Choose $\var_1 \in \cohull$.
	\For{$k = 1, 2, \ldots$}
		\State Sample a set $I_{k}$ of $b_k$ numbers independently and uniformly from $[n]$, and let
			\begin{align}
				\tilde{f}_{k}(\var) = \frac{1}{b_k} \sum_{i \in I_{k}} f_i(\var).
			\end{align}
		\State Choose $(\eta_{k}, d_{k}) \in [0,1] \times \atoms$ such that 
			\begin{align}
				\tilde{f}_{k}\left((1 - \eta_{k}) \var_{k} + \eta_{k} d_{k} \right) \le \min_{d \in \atoms} 
					\tilde{f}_{k}\left((1 - \eta_{k}) \var_{k} + \eta_{k} d\right) + \eta_{k} \epsilon_{k}.
			\end{align}
		\State $\var_{k+1} = (1 - \eta_{k}) \var_{k} + \eta_{k} d_{k}$.
	\EndFor
	\end{algorithmic}
\end{algorithm}

We show that ASJ is over-greedy when $b_k=1$ and the minimization
problem at each iteration is solved exactly.
The iterates can jump randomly from one vertex to another, leading to divergence.
This differs from the nonstochastic case where exact minimization leads to
smaller errors.
\begin{restatable}{prop}{propasjeg} \label{prop:asj-eg}
Let $b_{k} = 1$, $\epsilon_{k} = 0$ and $\eta_{k}$ jointly optimized with
$d_{k}$ in ASJ, then there exists a function 
$f(\var) = \frac{1}{n} \sum_{i=1}^{n} f_i(\var)$ with each $f_i$ being convex
and smooth, such that $\E f(\var_k) - f(\var^*)$ does not converge to 0 as 
$k \to \infty$.
\end{restatable}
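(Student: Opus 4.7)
My approach is to exhibit a very small explicit counterexample where exact single-sample joint minimization overcommits to the minimizer of the drawn component $f_{i_k}$, so that the iterates hop between two atoms forever and never concentrate near $\var^*$.

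\textbf{Construction.} I would take $n=2$, $\atoms = \{-1,+1\}$ (so $\cohull = [-1,1]$), and define
\begin{align*}
f_1(\var) = \tfrac{1}{2}(\var+1)^2, \qquad f_2(\var) = \tfrac{1}{2}(\var-1)^2.
\end{align*}
Both $f_i$ are convex and smooth quadratics. The average is
$f(\var) = \tfrac{1}{2}\bigl(f_1(\var)+f_2(\var)\bigr) = \tfrac{1}{2}\var^2 + \tfrac{1}{2}$,
so $\var^* = 0$ and $f(\var^*) = \tfrac{1}{2}$.

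\textbf{One-step behaviour.} With $b_k=1$, $\epsilon_k=0$, and $(\eta_k,d_k)$ an exact joint minimizer of $\tilde f_k((1-\eta)\var_k + \eta d)$ on $[0,1]\times \atoms$, I would argue that conditional on $i_k = 1$ the pair $(\eta,d) = (1,-1)$ attains value $f_1(-1) = 0$, the global minimum of $f_1$ on $\realnum$, so $\var_{k+1} = -1$; symmetrically, $i_k = 2$ forces $\var_{k+1} = +1$. The key observation is that each $f_i$ is minimized exactly at an atom, so a single line segment reaches the global minimum regardless of the current $\var_k$. The only thing that needs checking is that $d = +1$ when $i_k = 1$ cannot also be optimal, which is immediate since $f_1((1-\eta)\var_k + \eta) > 0$ for any $\eta > 0$ and $\var_k \ge -1$, while $\eta = 0$ gives $f_1(\var_k) > 0$ unless $\var_k = -1$ already.

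\textbf{Conclusion and obstacle.} It follows that for every $k \ge 2$, $\var_k$ is uniformly distributed on $\{-1,+1\}$ independent of $\var_1$, so
\begin{align*}
\E f(\var_k) - f(\var^*) = \tfrac{1}{2} f(-1) + \tfrac{1}{2} f(+1) - \tfrac{1}{2} = 1 - \tfrac{1}{2} = \tfrac{1}{2},
\end{align*}
which is a positive constant and in particular does not converge to $0$. There is essentially no hard step here; the subtle part is picking the example so that the individual minimizers coincide with atoms of $\atoms$, which is precisely what makes exact single-sample greedy minimization ``over-greedy'' and produces the persistent jumping between extreme points.
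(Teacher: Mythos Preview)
Your proof is correct and follows essentially the same approach as the paper: construct components $f_i$ each minimized exactly at an atom of $\atoms$, so that exact single-sample joint minimization always jumps to the atom corresponding to the drawn index, and the iterates never concentrate near $\var^*$. The paper's example is a 2D least-squares instance with $n=3$ atoms at the vertices of an equilateral triangle, whereas your 1D two-atom example is even simpler; the mechanism and conclusion are identical.
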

On the other hand, we can get a convergent algorithm using increasingly
larger batch size.
In essence, the theorem below shows that when we choose a batch size of $k$ a
iteration $k$ with a step size $\sqrt{k}$, we can get an error of
$O(1/\sqrt{t})$ at any iteration $t$.
Taking $b_{k}$ as a measure of the computational complexity of the $k$-th
problem, then to get an error of $\epsilon$, the complexity of the algorithm
is $O(\epsilon^{-4})$.

\begin{restatable}{thm}{thmasj} \label{thm:asj}
Assume that the diameter of $\cohull$ is $D$, each
$f_i(\var)$ is convex with curvature at most $M$, and 
$\norm{\grad f_i(\var)}_2 \le L$ for all $i$ and $\var \in \cohull$.
Let $\bar{\var}_{k} = \sum_{i=1}^{k} \eta_{i} \var_{i} / \sum_{i=1}^{k} \eta_{i}$.
In ASJ, when $b_{k} = t$, $\eta_{k} = t^{-1/2}$ and 
$\epsilon_{k} = c \eta_{k}$ for all $k$, we have 
\begin{align}
	\E f(\bar{\var}_{t}) - f(\var^*)
	\le \frac{f(\var_1) - f(\var^*) + DL + M + c}{\sqrt{t}},
\end{align}
When $b_k = k$, and $\eta_{k} = k^{-1/2}$, we have 
\begin{align}
	\E f(\bar{\var}_{k}) - f(\var^*)
	\le \frac{f(\var_1) - f(\var^*) + (DL + M + c) (\ln t + 1)}{\sqrt{t}}.
\end{align}
\end{restatable}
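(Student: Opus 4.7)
The plan is to adapt the deterministic one-step bound in Lemma~\ref{lem:approx-recurrence} to the stochastic surrogate $\tilde f_k$, take conditional expectation to replace $\tilde f_k$ by $f$, and then carefully control the bias arising because $\var_{k+1}$ depends on the batch $I_k$. Since each $f_i$ (and hence $\tilde f_k$) has curvature at most $M$, the argument proving Lemma~\ref{lem:approx-recurrence}, combined with $\var^* \in \cohull$ so that $\min_{d \in \atoms}\grad \tilde f_k(\var_k)^\top(d - \var_k) \le \grad \tilde f_k(\var_k)^\top(\var^* - \var_k)$, gives
\begin{align*}
\tilde f_k(\var_{k+1}) \le \tilde f_k(\var_k) + \eta_k \grad \tilde f_k(\var_k)^\top(\var^* - \var_k) + \tfrac{M}{2}\eta_k^2 + \eta_k \epsilon_k.
\end{align*}
Conditioning on the history $\mathcal F_k$ before $I_k$ is drawn and using $\E_k \tilde f_k(\var_k) = f(\var_k)$, $\E_k \grad \tilde f_k(\var_k) = \grad f(\var_k)$, together with convexity of $f$, yields $\E_k \tilde f_k(\var_{k+1}) \le f(\var_k) - \eta_k e_k + \frac{M}{2}\eta_k^2 + \eta_k \epsilon_k$. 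Writing $g_k := f - \tilde f_k$ and rearranging produces the key recurrence
\begin{align*}
\eta_k e_k \le f(\var_k) - \E_k f(\var_{k+1}) + \E_k g_k(\var_{k+1}) + \tfrac{M}{2}\eta_k^2 + \eta_k \epsilon_k.
\end{align*}

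The central obstacle is bounding the bias $\E_k g_k(\var_{k+1})$. A crude Lipschitz estimate using $\norm{\var_{k+1} - \var_k} \le \eta_k D$ only yields $|\E_k g_k(\var_{k+1})| = O(LD\eta_k)$, which on summation swamps the target $O(1/\sqrt{t})$ rate. The fix is to use both the bounded curvature of $g_k$ and the $O(1/b_k)$ variance of the stochastic gradient. Since $D_{g_k}(\var_{k+1}, \var_k) = D_f(\var_{k+1}, \var_k) - D_{\tilde f_k}(\var_{k+1},\var_k)$ and each Bregman term lies in $[0, \tfrac{M}{2}\eta_k^2]$, one has $|D_{g_k}(\var_{k+1},\var_k)| \le \tfrac{M}{2}\eta_k^2$; combined with $\E_k g_k(\var_k) = 0$, this yields
\begin{align*}
|\E_k g_k(\var_{k+1})| \le \eta_k\,\bigl|\E_k[\grad g_k(\var_k)^\top (d_k - \var_k)]\bigr| + \tfrac{M}{2}\eta_k^2.
\end{align*}
Cauchy--Schwarz with $\norm{d_k - \var_k} \le D$ and the minibatch variance bound $\E_k\norm{\grad g_k(\var_k)}^2 \le L^2/b_k$ then give $|\E_k g_k(\var_{k+1})| \le DL\eta_k/\sqrt{b_k} + M\eta_k^2/2$.

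Taking total expectation, summing for $k=1,\ldots,t$, and telescoping $\sum_k [\E f(\var_k) - \E f(\var_{k+1})] \le f(\var_1) - f(\var^*)$ yields
\begin{align*}
\sum_{k=1}^t \eta_k \E e_k \le f(\var_1) - f(\var^*) + M\sum_k \eta_k^2 + \sum_k \eta_k \epsilon_k + DL\sum_k \eta_k/\sqrt{b_k}.
\end{align*}
Convexity of $f$ and Jensen's inequality give $\E f(\bar\var_t) - f(\var^*) \le (\sum_k \eta_k \E e_k)/(\sum_k \eta_k)$. For $b_k = t$, $\eta_k = t^{-1/2}$, $\epsilon_k = c\eta_k$, the three sums on the right equal $1$, $c$, and $1$ respectively and $\sum_k \eta_k = \sqrt{t}$, giving the first bound. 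For $b_k = k$, $\eta_k = k^{-1/2}$ with the same $\epsilon_k = c\eta_k$, each of these three sums equals $\sum_k 1/k \le \ln t + 1$ while $\sum_k \eta_k = \Theta(\sqrt{t})$, producing the extra $\ln t + 1$ factor stated in the theorem.
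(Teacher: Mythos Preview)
Your argument is correct and yields exactly the same constants as the paper's proof, but the organization is somewhat different.

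The paper applies the curvature inequality to $f$ itself, writes $\grad f(\var_k)=\grad\tilde f_k(\var_k)+(\grad f(\var_k)-\grad\tilde f_k(\var_k))$, and introduces the auxiliary pivot $u_k=\argmin_{d\in\atoms} f((1-\eta_k)\var_k+\eta_k d)$; after a short computation the stochastic error appears as $\eta_k(\grad f(\var_k)-\grad\tilde f_k(\var_k))^{\top}(d_k-u_k)$, which is bounded by Cauchy--Schwarz and the minibatch variance $\E\norm{\grad f(\var_k)-\grad\tilde f_k(\var_k)}_2\le L/\sqrt{b_k}$. You instead apply the curvature inequality to the surrogate $\tilde f_k$, take conditional expectation first (so that $\var^*$ can serve directly as the pivot via convexity of $f$), and then isolate the bias $\E_k[(f-\tilde f_k)(\var_{k+1})]$ through the Bregman identity $g_k(\var_{k+1})=g_k(\var_k)+\eta_k\grad g_k(\var_k)^{\top}(d_k-\var_k)+D_{g_k}(\var_{k+1},\var_k)$; the observation that $D_f$ and $D_{\tilde f_k}$ both lie in $[0,\tfrac{M}{2}\eta_k^2]$ (by convexity and the curvature bound for each $f_i$) gives $|D_{g_k}|\le\tfrac{M}{2}\eta_k^2$, and the gradient term is handled by the same Cauchy--Schwarz/variance estimate. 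Your route avoids the extra pivot $u_k$ at the price of tracking the Bregman remainder; the paper's route keeps the analysis entirely in terms of $f$ and never needs $\E_k\tilde f_k(\var_{k+1})$. Either way the telescoping sum and the final substitution of $b_k,\eta_k$ are identical, and $\sum_{k=1}^t k^{-1/2}\ge\sqrt t$ suffices to justify your $\Theta(\sqrt t)$ claim in the second case.
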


\subsection{Approximate Stochastic Versions of Frank-Wolfe}

For FW, we can also sample a mini-batch estimation of the function $f(\var)$
and use the gradient of the estimation to replace the gradient of $f$, as
shown in \Cref{alg:asfw}.

\begin{algorithm}
	\caption{Approximate Stochastic Frank Wolfe (ASFW)}
	\label{alg:asfw}
	\begin{algorithmic}
	\State Choose $\var_0 \in \cohull$.
	\For{$k = 0, 1, 2, \ldots$}
		\State Sample a set $I_{k}$ of $b_k$ numbers independently and uniformly from $[n]$, and let
			\begin{align}
				\tilde{f}_{k}(\var) = \frac{1}{b_k} \sum_{i \in I_{k}} f_i(\var).
			\end{align}
		\State Choose $d_{k} \in \atoms$ such that
			\begin{align}
				\grad \tilde{f}_{k}(\var_{k})^{\top} d_{k} 
					\le \min_{d \in \atoms} \grad \tilde{f}_{k}(\var)^{\top} d + \epsilon_{k}.
			\end{align}
		\State $\var_{k+1} = (1 - \eta_{k}) \var_{k} + \eta_{k} d_{k}$.
	\EndFor
	\end{algorithmic}
\end{algorithm}

We can show that if there exists a constant $c > 0$, for all $k \ge 0$, we have
\begin{align}
  \langle \E(d_{k}), \grad f(\var_{k}) \rangle 
    \le \min_{d \in \atoms} \langle d, \grad f(\var_{k}) \rangle + c \eta_{k},
	\label{eq:expect}
\end{align}
then $\E(f(\var_k)) - f(\var^*)$ is of the order $O(1/k)$ for $k \ge 1$.
The above recursive property is a sufficient but not necessary condition for
ASFW to have $O(1/k)$ convergence rate.
Indeed, there are cases where the above recursive property does not hold, but
ASFW converges.

\begin{restatable}{prop}{propasfwega} \label{prop:asfw-eg-a}
Let $b_{k} = 1$, $\epsilon_{k} = 0$, $\eta_{k} = \frac{2}{k+2}$ in ASFW.
There exists a function $f(\var) = \frac{1}{n} \sum_{i=1}^{n} f_i(\var)$ with
each $f_i$ being convex and smooth, such that 
$\E f(\var_{k}) \to f(\var^*)$ as $k \to \infty$ but 
\Cref{eq:expect} is not satisfied.
\end{restatable}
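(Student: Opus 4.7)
The plan is to exhibit a simple one-dimensional finite-sum instance in which each sampled gradient forces the FW atom to a boundary vertex whose sign is independent of $\var_k$, so that the expected direction carries no information about $\grad f(\var_k)$; yet the iterate's variance still contracts to zero. Concretely, take $n = 2$, $\atoms = \{-1, +1\}$ (so $\cohull = [-1,1]$), and
\begin{equation*}
f_1(\var) = \tfrac{1}{2}(\var - 2)^2, \qquad f_2(\var) = \tfrac{1}{2}(\var + 2)^2,
\end{equation*}
both smooth and convex. Then $f(\var) = \tfrac{1}{2}\var^2 + 2$, with $\var^* = 0$ and $\grad f(\var) = \var$.

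Step 1 (\cref{eq:expect} fails). For every $\var \in [-1,1]$, $\grad f_1(\var) < 0$ and $\grad f_2(\var) > 0$, so the exact linear minimizer at iteration $k$ is $d_k = +1$ when $i = 1$ and $d_k = -1$ when $i = 2$, irrespective of $\var_k$. Since $I_k$ is uniform on $\{1,2\}$, $\E[d_k \mid \var_k] = 0$ as a deterministic constant. Hence $\langle \E[d_k \mid \var_k], \grad f(\var_k)\rangle = 0$, whereas $\min_{d \in \atoms}\langle d, \grad f(\var_k)\rangle = -|\var_k|$. So \cref{eq:expect} would force $|\var_k| \le c\eta_k = 2c/(k+2)$ for all $k$, almost surely.

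Step 2 (ASFW still converges). Since $d_k$ is independent of $\var_k$ with $\E d_k = 0$ and $d_k^2 = 1$, the update $\var_{k+1} = (1 - \eta_k)\var_k + \eta_k d_k$ gives
\begin{equation*}
\E[\var_{k+1}^2 \mid \var_k] = (1 - \eta_k)^2 \var_k^2 + \eta_k^2,
\end{equation*}
so $u_k := \E \var_k^2$ obeys $u_{k+1} = (1 - \eta_k)^2 u_k + \eta_k^2$. A routine induction with $\eta_k = 2/(k+2)$ and the trivial base bound $u_0 \le 1$ yields $u_k = O(1/k)$, and therefore $\E f(\var_k) - f(\var^*) = \tfrac{1}{2} u_k \to 0$.

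The two steps combine into the contradiction: $u_k$ is of order $1/k$ and $\eta_k^2$ is also of order $1/k^2$, so $\E(\var_k / \eta_k)^2$ grows like $k$, ruling out any fixed $c$ for which $|\var_k| \le c\eta_k$ could hold almost surely for all large $k$. The main design obstacle is calibrating the instance so that two competing requirements coexist: the stochastic direction must be sufficiently decoupled from $\var_k$ to invalidate \cref{eq:expect}, yet its noise must be balanced and of size $\eta_k^2$ so that the deterministic contraction $(1-\eta_k)^2$ still overwhelms it at the $O(1/k)$ rate. Placing each $f_i$'s minimizer outside $\cohull$, symmetrically around $\var^* = 0$, achieves exactly this.
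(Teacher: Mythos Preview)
Your construction is correct and follows essentially the same idea as the paper: a symmetric two-term least-squares instance in which each stochastic gradient has a sign that is independent of $\var_k$, so that $\E[d_k\mid\var_k]=0$ while $\min_{d\in\atoms}\langle d,\grad f(\var_k)\rangle=-|\var_k|$. The paper uses a two-dimensional version (with $x_1=x_2=(1,1)$, $y_1=-y_2$, and $\atoms$ a sphere), and your one-dimensional reduction with $\atoms=\{-1,1\}$ is a cleaner realization of the same mechanism; the convergence argument via the second-moment recurrence $u_{k+1}=(1-\eta_k)^2u_k+\eta_k^2$ matches the paper's concentration argument for $Y_k$.

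One small gap: to conclude that \cref{eq:expect} fails you invoke ``$u_k$ is of order $1/k$'' to force $\E(\var_k/\eta_k)^2\to\infty$, but your induction only established the \emph{upper} bound $u_k=O(1/k)$. You also need a lower bound $u_k\ge c'/k$, which does follow from the same recurrence (e.g.\ $u_k\ge \tfrac{4}{3(k+1)}$ by an analogous induction). Alternatively, and this is closer to what the paper does, simply note that with probability $2^{-k}>0$ all $d_0,\ldots,d_{k-1}$ equal $+1$, in which case $\var_k=1$ (since $\eta_0=1$); hence $|\var_k|\le c\eta_k$ fails with positive probability for every $k>2c-2$, and no fixed $c$ can make \cref{eq:expect} hold almost surely.
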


\begin{restatable}{prop}{propasfwegb} \label{prop:asfw-eg-b}
Let $b_{k} = 1$, $\epsilon_{k} = 0$, and $\eta_{k}$ be arbitrarily chosen in
ASFW.
There exists a convex and smooth $f$ such that 
$\lim_{k \to \infty} \E f(\var_{k})$ exists, but the limit is
larger than $f(\var^*)$.
\end{restatable}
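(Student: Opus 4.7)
The plan is to exhibit a small, explicit counterexample that isolates the failure mode created by plugging a single-sample stochastic gradient into the nonlinear linear-minimization step. I take $\cohull = [-1, 1] \subset \realnum$ with atom set $\atoms = \{-1, +1\}$ and define the two-term finite sum $f(\var) = \tfrac{1}{2}\bigl(f_1(\var) + f_2(\var)\bigr)$ with $f_1(\var) = \tfrac{1}{2}(\var-1)^2$ and $f_2(\var) = \tfrac{1}{2}(\var+1)^2$. Then $f(\var) = \tfrac{1}{2}(\var^2 + 1)$, both $f_i$ are convex and $1$-smooth, the minimizer is $\var^* = 0$, and $f(\var^*) = \tfrac{1}{2}$.

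The first key step is to observe that with $b_k = 1$ the stochastic gradient chosen in ASFW carries a fixed sign on all of $\cohull$: for every $\var \in [-1,1]$ one has $\grad f_1(\var) = \var - 1 \le 0$, so the exact ($\epsilon_k = 0$) linear-minimization step forces $d_k = +1$ whenever the index $i_k = 1$ is sampled; symmetrically $\grad f_2(\var) = \var + 1 \ge 0$ forces $d_k = -1$ whenever $i_k = 2$. Consequently $d_k$ is uniform on $\{-1,+1\}$ and independent of $\var_k$, regardless of history.

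Next I freeze the failure with a convenient constant step size $\eta_k = c$ for some fixed $c \in (0,1)$, which is admissible under the ``arbitrarily chosen $\eta_k$'' clause. Conditioning on $\var_k$ and using $\E[d_k] = 0$ and $\E[d_k^2] = 1$, I obtain the first-order linear recursion
\begin{align*}
\E[\var_{k+1}^2] = (1-c)^2 \, \E[\var_k^2] + c^2,
\end{align*}
whose unique fixed point is $a^* = c/(2-c) > 0$ and to which $\E[\var_k^2]$ converges geometrically from any starting value in $[0,1]$. Therefore $\E f(\var_k) = \tfrac{1}{2}\bigl(\E[\var_k^2] + 1\bigr) \to \tfrac{1}{2-c} > \tfrac{1}{2} = f(\var^*)$, so the limit exists and strictly exceeds the optimum.

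There is no serious obstacle; the only things to verify are routine: iterates remain in $\cohull$ because $\var_{k+1}$ is a convex combination of $\var_k \in [-1,1]$ and $d_k \in \{-1,+1\}$, and the existence of $\lim_k \E f(\var_k)$ follows immediately from the closed form of the linear recursion. Conceptually, the construction pinpoints why ASFW with $b_k = 1$ cannot be rescued by tuning the step size: the argmin operation is nonlinear in the gradient, so an unbiased stochastic gradient produces a direction $d_k$ whose expectation carries essentially no information about the true Frank--Wolfe direction, and the resulting persistent variance accumulates into a strictly positive asymptotic gap.
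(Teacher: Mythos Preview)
Your construction is correct for the constant step size you fix, but it does not establish the proposition as stated. The clause ``$\eta_k$ be arbitrarily chosen'' is meant to be read universally: the point of \Cref{prop:asfw-eg-b}, in contrast to \Cref{prop:asfw-eg-a} where $\eta_k = 2/(k+2)$ is fixed, is that the failure cannot be repaired by \emph{any} choice of step sizes. Your example cannot deliver this. Because $\E[d_k] = 0 = \var^*$, the recursion $\E[\var_{k+1}^2] = (1-\eta_k)^2\,\E[\var_k^2] + \eta_k^2 \le (1-\eta_k)\,\E[\var_k^2] + \eta_k^2$ drives $\E[\var_k^2]\to 0$ whenever $\eta_k\to 0$ and $\sum_k\eta_k=\infty$ (this is exactly the recurrence handled by \Cref{thm:converge}), and then $\E f(\var_k)\to f(\var^*)$. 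So for the standard schedule $\eta_k = 2/(k+2)$ your $f$ is \emph{not} a counterexample; it is in fact a one-dimensional instance of the convergent behaviour exhibited in \Cref{prop:asfw-eg-a}. Your final paragraph claims the construction shows ASFW ``cannot be rescued by tuning the step size,'' but for your $f$ tuning does rescue it.

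The paper's example is engineered so that the state-independent direction has the \emph{wrong mean}: with three data points $x_1,x_2,x_3$ on the unit circle and labels $y=(1,-1,-1)$ over the ball $\|\var\|_2\le r<1$, each $\grad f_i$ has constant sign on the whole domain (as in your setup), but now $\E[d_k] = (0,\tfrac{2r}{3})$ while $\var^* = (0,r)$. Since $d_k$ is i.i.d.\ and independent of $\var_k$, the iterates are stochastic convex combinations whose mean is pulled toward $(0,\tfrac{2r}{3})$; for any nontrivial $(\eta_k)$ one gets $\lim_k \E f(\var_k) \ge f(0,\tfrac{2r}{3}) > f(0,r)$, and in particular with $\eta_k\to 0$, $\sum_k\eta_k=\infty$ one has $\var_k\to(0,\tfrac{2r}{3})$ in probability. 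The missing idea in your attempt is precisely this: to defeat all step-size schedules you must bias $\E[d_k]$ away from $\var^*$, not merely inject variance around it.
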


\citet{reddi2016stochastic-b} considered the exact version of ASFW, that is,
the case with $\epsilon_{k} = 0$.
They showed that for smooth nonconvex $f$, with suitale
choice of $b_{k}$ and $\eta_{k}$, one can achieve an error of $\epsilon$ with
$O(\epsilon^{-4})$ stochastic gradients and $O(\epsilon^{-2})$ linear
optimizations.
We remark here that we can generalize their results to the approximate case:
we choose $b_{k}$, $\epsilon_{k}$, $\eta_{k}$ as in \Cref{thm:asj}, then we
get the same kind of bound as for ASJ, with difference only in the constants. 
This result applies to both the smooth convex case and the smooth
nonconvex case, with the cost in the nonconvex case having the form of the
duality bound.

We consider the nonsmooth convex case, and give a stochastic version that has
error $\epsilon$ using $O(\epsilon^{-4})$ stochastic gradients and
$O(\epsilon^{-4})$ linear optimizations.
The algorithm aggregates past stochastic gradients to construct a proxy 
$\bar{g}_{k} + \grad \Phi(\var_{k})$ for the full gradient.
The component $\bar{g}_{k}$ is a weighted sum of the stochastic gradients from
past iterations.
The term $\grad \Phi(\var_{k})$ has a regularizing effect of encouraging
alignment of $d$ with $\var_{k} - \var_1$ when $\Phi$ is strongly convex with
$\Phi(\var_1) = 0$. 
This is because 
$\langle \grad \Phi(\var) - \grad \Phi(\var'), \var - \var' \rangle
\ge \rho \norm{\var - \var'}_2^2$.
Without loss of generality, assume $\Phi(\var)$ is $\rho$-strongly convex and
1-smooth.
One possible choice of $\Phi$ is 
$\Phi(\var) = \frac{1}{2} \norm{\var - \var_1}_2^2$.

\begin{algorithm}[h!]
	\caption{Approximate Regularized Stochastic Frank Wolfe}
	\label{alg:arsfw}
	\begin{algorithmic}
		\State $\var_1 \gets \argmin_{\var \in \cohull} \Phi(\var)$.
		\State $\bar{g}_1 = 0$.
		\For{$k = 1, 2, \ldots$}
			\State Choose $d_{k} \in \atoms$ such that
				$\left(\bar{g}_{k} + \grad \Phi(\var_{k})\right)^{\top} d_{k}
				\le \min_{d \in \atoms} 
				\left(\bar{g}_{k} + \grad \Phi(\var_{k})\right)^{\top} d + \epsilon_{k}$.
			\State $\var_{k+1} = (1 - \eta_{k}) \var_{k} + \eta_{k} d_{k}$.
			\State $\bar{g}_{k+1} = \bar{g}_{k} + \sigma_{k} g_{k}$, where
			$g_{k} = \grad f_{i_{k}}(\var_{k})$ for random $i_{k}$ in $[n]$.
		\EndFor
	\end{algorithmic}
\end{algorithm}

A similar algorithm has been used in online learning by 
\citet{hazan2012projection,hazan2016introduction}.
They used fixed instead of variable $\eta_{k}$, and they perform exact instead
of approximation minimization at each step.

\begin{thm} \label{thm:arsfw}
Let $\Phi(\var)$ be a $\rho$-stronly convex 1-smooth function,
$R^2 = \max_{\var \in \cohull} \Phi(\var) - \Phi(\var_1)$,
$\eta_{k} = \frac{1}{k^{p}}$, 
$\epsilon_{k} = \frac{(\lambda-1) R^2}{\rho} \eta_{k}$, and
$\sigma_{k} \le c \eta_{k}^{3/2}$, where $\lambda \ge 1$, $c > 0$ and 
$p \in [0,1]$ are constants. 
Assume $\norm{\grad f_i(\var)}_2 \le L$ for all $i$ and $\var$.
Let $\bar{\var}_t = \sum_{k=1}^{t} \sigma_{k} \var_{k} / \sum_{k=1}^{t} \sigma_{k}$,
	$K = \left(\sqrt{\frac{1}{2\rho}} \frac{cL}{1-p}
		+ \sqrt{\frac{\lambda R^2 + c^2 L^2}{\rho(1-p)} 
		+ \frac{c^2 L^2}{2 \rho (1-p)^2}}\right)^2$,
then we have
\begin{align}
  \E f(\bar{\var}_t) - f(\var^*) 
	\le 
		\left(3L \sqrt{\frac{2K}{\rho}} + \frac{2c L^2}{\rho}\right)
		\frac{\sum_{k=1}^{t} \sigma_{k} \sqrt{\eta_{k}}}{\sum_{k=1}^{t} \sigma_{k}}
		+ \frac{R^2}{\sum_{k=1}^{t} \sigma_{k}}.
		\label{eq:rate1}
\end{align}
In particular, when $p = \frac{1}{2}$, for any $t \ge 1$,
\begin{align}
  \E f(\bar{\var}_t) - f(\var^*) 
	\le 
		\left(3L \sqrt{\frac{2K}{\rho}} + \frac{2c L^2}{\rho}\right)
		\frac{\ln t + 1}{t^{1/4}}
		+ \frac{R^2}{c t^{1/4}}.
		\label{eq:rate2}
\end{align}
In addition, if $\sigma_{k} = \sigma = \frac{c}{t^{3/4}}$, then 
\begin{align}
  \E f(\bar{\var}_t) - f(\var^*) 
	\le 
		\left(4L \sqrt{\frac{2K}{\rho}} + \frac{8 c L^2}{3 \rho} + \frac{R^2}{c}\right)
		\frac{1}{t^{1/4}}.
		\label{eq:rate3}
\end{align}
\end{thm}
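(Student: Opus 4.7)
The plan is to cast Algorithm 4 as an approximate follow-the-regularized-leader (FTRL) scheme: the losses are the linear maps $\var \mapsto \sigma_{k} g_{k}^{\top} \var$, the regularizer is $\Phi$, and in place of exact minimization of the cumulative FTRL objective $\Psi_{k}(\var) := \bar{g}_{k}^{\top} \var + \Phi(\var)$ over $\cohull$ we take a single approximate Frank--Wolfe step against the gradient $\grad \Psi_{k}(\var_{k}) = \bar{g}_{k} + \grad \Phi(\var_{k})$. Because each $f_{i}$ is convex and $\E[g_{k} \mid \var_{k}] = \grad f(\var_{k})$, we have $\E[\sigma_{k}(f(\var_{k}) - f(\var^{*}))] \leq \E[\sigma_{k} g_{k}^{\top}(\var_{k} - \var^{*})]$, and Jensen's inequality applied to the $\sigma_{k}$-weighted average $\bar{\var}_{t}$ reduces the theorem to bounding $\E \sum_{k=1}^{t} \sigma_{k} g_{k}^{\top}(\var_{k} - \var^{*})$ and dividing by $\sum_{k=1}^{t} \sigma_{k}$.

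Next I derive a per-step regret inequality. Combining the $1$-smoothness of $\Phi$ on the step $\var_{k+1} - \var_{k} = \eta_{k}(d_{k} - \var_{k})$ with the approximate FW condition $(\bar{g}_{k} + \grad \Phi(\var_{k}))^{\top}(d_{k} - \var^{*}) \leq \epsilon_{k}$ (valid for the comparator $\var^{*} \in \cohull$ because the FW condition over $\atoms$ extends to $\cohull$ by convex combination) and with convexity of $\Psi_{k}$, one obtains a contraction-style bound $\Psi_{k}(\var_{k+1}) - \Psi_{k}(\var^{*}) \leq (1-\eta_{k})[\Psi_{k}(\var_{k}) - \Psi_{k}(\var^{*})] + \eta_{k} \epsilon_{k} + \tfrac{1}{2}\|\var_{k+1}-\var_{k}\|_{2}^{2}$. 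Accounting for the increment $\sigma_{k} g_{k}^{\top} \var$ when passing from $\Psi_{k}$ to $\Psi_{k+1}$ and summing then produces an FTRL-style inequality of the form $\sum_{k=1}^{t} \sigma_{k} g_{k}^{\top}(\var_{k} - \var^{*}) \leq R^{2} + \sum_{k} \eta_{k} \epsilon_{k} + \tfrac{1}{2}\sum_{k}\|\var_{k+1}-\var_{k}\|_{2}^{2} + \sum_{k} \sigma_{k} g_{k}^{\top}(\var_{k+1}-\var_{k})$, using $\Phi(\var^{*}) - \Phi(\var_{1}) \leq R^{2}$.

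The main obstacle is a self-consistent bound on the iterate radius $\|\var_{k} - \var_{1}\|_{2}$, which controls both the stability term and the cross term above. The $\rho$-strong convexity of $\Phi$ together with $\var_{1} = \argmin_{\var \in \cohull}\Phi(\var)$ gives $\|\var_{k} - \var_{1}\|_{2}^{2} \leq (2/\rho)(\Phi(\var_{k}) - \Phi(\var_{1}))$. Feeding the summed regret inequality back into this bound, and using $\sigma_{k} \leq c \eta_{k}^{3/2}$, $\|g_{k}\|_{2} \leq L$, together with the elementary estimate $\sum_{k=1}^{t} k^{-q} \leq t^{1-q}/(1-q)$ for $q \in (0,1)$, produces a self-referential inequality of the form $K \leq \alpha \sqrt{K} + \beta$ in a quantity $K$ majorising $(\rho/2)\max_{k \leq t}\|\var_{k} - \var_{1}\|_{2}^{2}$. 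The coefficient $\alpha = cL\sqrt{2/\rho}/(1-p)$ arises from the cross term (through $\sum_{k} \sigma_{k} L \eta_{k} \|d_{k}-\var_{k}\|_{2}$ and $\sum_{k} \eta_{k}^{5/2}$), and $\beta = (\lambda R^{2} + c^{2} L^{2})/(\rho(1-p))$ absorbs $\sum_{k} \eta_{k} \epsilon_{k}$ (scaled by $(\lambda-1) R^{2}/\rho$) together with the stability contribution $\sum_{k} \sigma_{k}^{2} L^{2}/(2\rho)$. Solving this quadratic in $\sqrt{K}$ yields exactly the $K$ appearing in the theorem.

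Once $K$ is fixed, the remaining work is arithmetic. Dividing the regret bound by $\sum_{k} \sigma_{k}$ and using $\|\var_{k+1} - \var_{k}\|_{2} \leq \eta_{k} \sqrt{2K/\rho}$ in the stability and cross terms gives \Cref{eq:rate1}, with $\sum_{k} \sigma_{k} \sqrt{\eta_{k}}/\sum_{k} \sigma_{k}$ as the dominant rate factor. The specialization $p = 1/2$ yields $\sum_{k=1}^{t} \sigma_{k} \geq c\sum_{k=1}^{t} k^{-3/4} \geq c t^{1/4}$ and $\sum_{k=1}^{t} \sigma_{k} \sqrt{\eta_{k}} \leq c \sum_{k=1}^{t} k^{-1} \leq c(\ln t + 1)$, giving \Cref{eq:rate2}. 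Finally, the constant choice $\sigma_{k} = c t^{-3/4}$ reduces $\sum_{k} \sigma_{k} \sqrt{\eta_{k}}/\sum_{k}\sigma_{k}$ to $t^{-1}\sum_{k=1}^{t} k^{-1/4} \leq (4/3) t^{-1/4}$, yielding \Cref{eq:rate3}.
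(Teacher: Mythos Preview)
Your high-level framing is right and matches the paper: cast the algorithm as approximate FTRL with cumulative objective $\Psi_{k}(\var)=\bar g_{k}^{\top}\var+\Phi(\var)$, reduce the expected error to the regret $\sum_{k}\sigma_{k}g_{k}^{\top}(\var_{k}-\var^{*})$ via convexity and unbiasedness, and finish with the harmonic/integral estimates for the specific choices of $p$ and $\sigma_{k}$. The arithmetic in your last paragraph is correct.

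The gap is in the middle, where $K$ is derived. You propose to close the argument through a self-consistent bound on $\lVert \var_{k}-\var_{1}\rVert_{2}$, but this is the wrong quantity. By strong convexity of $\Phi$ one already has $\tfrac{\rho}{2}\lVert \var_{k}-\var_{1}\rVert_{2}^{2}\le \Phi(\var_{k})-\Phi(\var_{1})\le R^{2}$ trivially, so there is nothing self-referential to solve; and $\lVert \var_{k}-\var_{1}\rVert_{2}$ does not control $\lVert \var_{k+1}-\var_{k}\rVert_{2}=\eta_{k}\lVert d_{k}-\var_{k}\rVert_{2}$ beyond the diameter bound, since $d_{k}$ is an arbitrary atom. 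Plugging the diameter bound into your ``stability'' and ``cross'' terms yields $\sum_{k}\eta_{k}^{2}$ and $\sum_{k}\sigma_{k}\eta_{k}$ scalings, not the $\sum_{k}\sigma_{k}\sqrt{\eta_{k}}$ that appears in \eqref{eq:rate1}, and the constant $3L\sqrt{2K/\rho}$ does not fall out either. In short, your summed inequality does not produce \eqref{eq:rate1} with the stated $K$.

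What the paper actually does is work with the \emph{moving} minimizer $\var_{k}^{*}=\argmin_{\var\in\cohull}\Psi_{k}(\var)$. Two ingredients replace your global summation. First, a Be-The-Leader bound (\Cref{lem:lazy}) controls the regret of the sequence $(\var_{k}^{*})$: $\sum_{k}\sigma_{k}g_{k}^{\top}(\var_{k}^{*}-\var^{*})\le \sum_{k}\tfrac{2\sigma_{k}^{2}L^{2}}{\rho}+R^{2}$. Second, the bridge from $\var_{k}^{*}$ to $\var_{k}$ is through $\lVert \var_{k}-\var_{k}^{*}\rVert_{2}$, which by strong convexity is at most $\sqrt{(2/\rho)\,e_{k}}$ with $e_{k}=\Psi_{k}(\var_{k})-\Psi_{k}(\var_{k}^{*})$. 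Because the FW step is taken on $\Psi_{k}$, one gets the per-step recurrence $e_{k+1}\le(1-\eta_{k})e_{k}+\tfrac{\lambda R^{2}}{\rho}\eta_{k}^{2}+\sqrt{2/\rho}\,\sigma_{k}L\sqrt{e_{k+1}}$, and it is an \emph{induction} on this recurrence (\Cref{lem:arsfw-recurrence}) that yields $e_{k}\le K\eta_{k}$ with exactly the $K$ in the statement. This is where the quadratic $K=\alpha\sqrt{K}+\beta$ actually arises. The $\sqrt{\eta_{k}}$ in \eqref{eq:rate1} then comes from $\lVert \var_{k}-\var_{k}^{*}\rVert_{2}\le\sqrt{2K\eta_{k}/\rho}$, not from $\lVert \var_{k+1}-\var_{k}\rVert_{2}$. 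To fix your proof you need to introduce $\var_{k}^{*}$ explicitly and run this inductive argument rather than a single summed inequality.
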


We state two lemmas and then prove this theorem.

\begin{restatable}{lem}{lemlazy} \label{lem:lazy}
Let $\Phi(\var)$ be a $\rho$-strongly convex function,
$F_{k}(\var) = \sum_{i=1}^{s-1} \sigma_i g_i^{\top} \var + \Phi(\var)$, 
$\var^*_{k} = \argmin_{\var \in X} F_{k}(\var)$, 
$R^2 = \max_{\var \in \cohull} \Phi(\var) - \Phi(\var^*_1)$.
Then for any $\var \in X$,
\begin{align*}
	\sum_{k=1}^{t} \sigma_{k} g_{k}^{\top} (\var^*_{k} - \var) 
	\le \sum_{k=1}^{t} \frac{2 \sigma_{k}^2 \norm{g_{k}}_2^2}{\rho} + R^2.
\end{align*}
\end{restatable}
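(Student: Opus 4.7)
The plan is to recognize the lemma as a standard follow-the-regularized-leader (FTRL) / be-the-leader regret bound applied to the linearized losses $\sigma_k g_k^\top \var$ with regularizer $\Phi$, and to carry out the textbook two-step analysis: a be-the-leader (BTL) telescoping argument to reduce the regret against $\var$ to a sum of one-step stability terms, followed by a strong-convexity-based stability bound.

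First, I would prove the BTL inequality
\[
\sum_{k=1}^{t} \sigma_k g_k^\top \var^*_{k+1} \;\le\; \sum_{k=1}^{t} \sigma_k g_k^\top \var + \Phi(\var) - \Phi(\var^*_1)
\]
for any $\var \in X$, by induction on $t$. The base case is trivial, and the inductive step uses that $\var^*_{t+1}$ minimizes $F_{t+1}$, so $F_{t+1}(\var^*_{t+1}) \le F_{t+1}(\var)$; after substituting the inductive hypothesis evaluated at $\var^*_{t+1}$, one obtains the claim. Using the definition $R^2 \ge \Phi(\var) - \Phi(\var^*_1)$ for every $\var \in \cohull$, this gives
\[
\sum_{k=1}^{t} \sigma_k g_k^\top (\var^*_{k+1} - \var) \;\le\; R^2.
\]

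Next, I would write the quantity to be bounded as
\[
\sum_{k=1}^{t} \sigma_k g_k^\top (\var^*_k - \var)
= \sum_{k=1}^{t} \sigma_k g_k^\top (\var^*_k - \var^*_{k+1}) + \sum_{k=1}^{t} \sigma_k g_k^\top (\var^*_{k+1} - \var),
\]
so that, by the BTL step, it remains to control the stability term $\sum_k \sigma_k g_k^\top(\var^*_k - \var^*_{k+1})$ by $\sum_k 2\sigma_k^2 \|g_k\|_2^2 / \rho$.

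For the stability bound, I would use that both $F_k$ and $F_{k+1} = F_k + \sigma_k g_k^\top \var$ inherit $\rho$-strong convexity from $\Phi$, and $F_{k+1} - F_k$ is linear. Applying strong convexity at each minimizer,
\[
F_k(\var^*_{k+1}) - F_k(\var^*_k) \ge \tfrac{\rho}{2}\|\var^*_{k+1}-\var^*_k\|_2^2,\qquad F_{k+1}(\var^*_k) - F_{k+1}(\var^*_{k+1}) \ge \tfrac{\rho}{2}\|\var^*_{k+1}-\var^*_k\|_2^2,
\]
and adding yields $\sigma_k g_k^\top (\var^*_k - \var^*_{k+1}) \ge \rho \|\var^*_{k+1} - \var^*_k\|_2^2$. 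Combining this with Cauchy--Schwarz on the same inner product gives $\|\var^*_{k+1} - \var^*_k\|_2 \le \sigma_k \|g_k\|_2 / \rho$, and hence $\sigma_k g_k^\top (\var^*_k - \var^*_{k+1}) \le \sigma_k^2 \|g_k\|_2^2 / \rho$, which is stronger than the factor-$2$ bound claimed. Summing over $k$ and adding $R^2$ yields the lemma.

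There is no real obstacle; the only thing to be careful about is the direction of the BTL induction (so that the regularizer term comes out as $\Phi(\var) - \Phi(\var^*_1)$ rather than $\Phi(\var) - \Phi(\var^*_{t+1})$) and keeping track of whether $\Phi$ or $F_k$ is the object being used as the $\rho$-strongly convex function in the stability step; since $F_k$ differs from $\Phi$ by a linear term, both are $\rho$-strongly convex, so the argument goes through cleanly.
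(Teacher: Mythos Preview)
Your proposal is correct and follows essentially the same approach as the paper: the same be-the-leader induction to reduce to one-step stability, and the same strong-convexity/Cauchy--Schwarz argument to control each stability term. The only difference is that the paper uses only the strong convexity of $F_{k+1}$ at its minimizer (yielding $\norm{\var^*_k-\var^*_{k+1}}_2 \le 2\sigma_k\norm{g_k}_2/\rho$ and hence the factor $2$ in the lemma), whereas you add the analogous inequality for $F_k$ and obtain the slightly sharper constant $\sigma_k^2\norm{g_k}_2^2/\rho$ per term.
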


\begin{restatable}{lem}{lemsfwrecurrence} \label{lem:arsfw-recurrence}
Let $\eta_{k} = \frac{1}{k^{p}}$, and
$\sigma_{k} \le c \eta_{k}^{3/2}$, where $c$ is a positive constant, and $p$ a
positive constant in $(0,1)$.
Let $R$, $\rho$ and $L$ be positive constants, and $K$ as defined in
\Cref{thm:arsfw}.
If $e_1 \le K \eta_1$, and
\begin{align*}
  e_{k+1} 
    &\le (1 - \eta_{k}) e_{k}
      + \frac{\lambda R^2}{\rho} \eta_{k}^2
			+ \sqrt{\frac{2}{\rho}} \sigma_{k} L \sqrt{e_{k+1}}, \numberthis\label{eq:e}
\end{align*}
then $e_{k} \le K \eta_{k}$ for any $k \ge 1$.
\end{restatable}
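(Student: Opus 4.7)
The plan is to prove $e_k \le K\eta_k$ by induction on $k$. The base case $k=1$ holds by assumption. For the inductive step, I assume $e_k \le K\eta_k$ and substitute the bound, together with $\sigma_k \le c\eta_k^{3/2}$, into \cref{eq:e} to obtain
\[
e_{k+1} \le \beta + \alpha\sqrt{e_{k+1}}, \qquad \alpha := \sqrt{\tfrac{2}{\rho}}\,cL\,\eta_k^{3/2}, \qquad \beta := (1-\eta_k) K\eta_k + \tfrac{\lambda R^2}{\rho}\eta_k^2.
\]
This is a quadratic inequality in $u := \sqrt{e_{k+1}}$ of the form $u^2 - \alpha u - \beta \le 0$. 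To conclude $e_{k+1} \le K\eta_{k+1}$ it suffices to show $u \le \sqrt{K\eta_{k+1}}$; after verifying $2\sqrt{K\eta_{k+1}} \ge \alpha$ (which reduces to $K \ge c^2L^2/\rho$, implied by $K \ge B$ from its definition, using $\eta_k \le 1$ and $\eta_k/\eta_{k+1} \le 2^p$) and squaring, the task becomes showing $\beta + \alpha\sqrt{K\eta_{k+1}} \le K\eta_{k+1}$.

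Using $\eta_{k+1} \le \eta_k$, I upper bound $\alpha\sqrt{K\eta_{k+1}} \le cL\sqrt{2K/\rho}\,\eta_k^2$ and rearrange to the sufficient condition
\[
K(\eta_k - \eta_{k+1}) \le \Bigl(K - cL\sqrt{2K/\rho} - \tfrac{\lambda R^2}{\rho}\Bigr)\eta_k^2.
\]
The key algebraic step is to notice that the definition of $K$ in \Cref{thm:arsfw} can be written as $K = 2A\sqrt{K} + B$ with $A = \sqrt{1/(2\rho)}\,cL/(1-p)$ and $B = (\lambda R^2 + c^2L^2)/(\rho(1-p))$; multiplying this identity through by $(1-p)$ and rearranging yields
\[
K - cL\sqrt{2K/\rho} - \tfrac{\lambda R^2}{\rho} = pK + \tfrac{c^2L^2}{\rho} \ge pK.
\]

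Thus it suffices to verify $\eta_k - \eta_{k+1} \le p\eta_k^2$, which follows from the mean-value theorem applied to $x \mapsto x^{-p}$: $\eta_k - \eta_{k+1} \le pk^{-p-1} \le pk^{-2p} = p\eta_k^2$, where the last inequality uses $k \ge 1$ and $p \le 1$. The main obstacle is purely algebraic bookkeeping: the elaborate constant $K$ is precisely chosen so that it solves the quadratic $K = 2A\sqrt{K} + B$, which is exactly what produces the clean identity $K - cL\sqrt{2K/\rho} - \lambda R^2/\rho = pK + c^2L^2/\rho$ and allows the tiny step-size decrement $\eta_k - \eta_{k+1}$ to be absorbed. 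Matching the defining quadratic for $K$ against the recurrence's coefficients is the content of the argument; the rest is standard induction.
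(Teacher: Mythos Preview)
Your proposal is correct and follows essentially the same route as the paper: both arguments are an induction that reduces, via the defining quadratic $K = 2A\sqrt{K}+B$ for $K$, to the step-size inequality $\eta_k - \eta_{k+1} \le p\,\eta_k^2$. The only cosmetic difference is that the paper first explicitly solves $e_{k+1}\le \beta + \alpha\sqrt{e_{k+1}}$ to get a bound of the form $A' + \sqrt{2/\rho}\,\sigma_k L\sqrt{A'}$ (picking up an extra $\tfrac{c^2L^2}{\rho}\eta_k^2$ inside $A'$), whereas you verify the equivalent sufficient condition $K\eta_{k+1}\ge \alpha\sqrt{K\eta_{k+1}}+\beta$ directly; your identity $K - cL\sqrt{2K/\rho} - \lambda R^2/\rho = pK + c^2L^2/\rho$ then discards exactly that same term, so the two arguments coincide.
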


\begin{proof}[Proof of \Cref{thm:arsfw}]
Let $F_{k}(\var) = \sum_{i=1}^{s-1} \sigma_i g_i^{\top} \var + \Phi(\var)$,
then $F_{k}$ is $\rho$-strongly convex and 1-smooth.
Let $\var^*_{k} = \argmin_{\var \in \cohull} F_{k}(\var)$,
$h_{k}(\var) = f_{i_{k}}(\var)$, and
$\tilde{h}_{k}(\var) = h_{k}(\var - (\var^*_{k} - \var_{k}))$.
Then we have
$g_{k} = \grad \tilde{h}_{k}(\var^*_{k})$.
Using the convexity of $\tilde{h}$ and \Cref{lem:lazy}, we have
\begin{align*} 
	\sum_{k=1}^{t} \sigma_{k} \left(\tilde{h}_{k}(\var^*_{k}) - \tilde{h}_{k}(\var^*)\right)
	\le \sum_{k=1}^{t} \sigma_{k} g_{k}^{\top} (\var^*_{k} - \var^*)  
	\le \sum_{k=1}^{t} \frac{2 \sigma_{k}^2 \norm{g_{k}}_2^2}{\rho} + R^2.
\end{align*}
We have $|h_{k}(\var) - \tilde{h}_{k}(\var)| \le L \norm{\var_{k} - \var^*_{k}}_2$
for any $\var \in \cohull$ because $h_{k}$ is $L$-Lipschitz.
Hence
\begin{align*}
	\sum_{k=1}^{t} \sigma_{k} \left(h_{k}(\var_{k}) - h_{k}(\var^*)\right)
	&= \sum_{k=1}^{t} \sigma_{k} \left((h_{k}(\var_{k}) - h_{k}(\var^*_{k})) + h_{k}(\var^*_{k}) - h_{k}(\var^*)\right)\\
  &\le \sum_{k=1}^{t} \sigma_{k} \left(L \norm{\var_{k} - \var^*_{k}}_2
		+ (\tilde{h}_{k}(\var^*_{k}) + \norm{\var_{k} - \var^*_{k}}_2) 
	- (\tilde{h}_{k}(\var^*) - \norm{\var_{k} - \var^*_{k}}_2)\right)\\
  &\le \sum_{k=1}^{t} \left(3L \sigma_{k} \norm{\var_{k} - \var^*_{k}}_2
		+ \frac{2 \sigma_{k}^2 L^2}{\rho}\right) + R^2.
\end{align*}
We have 
$\frac{\rho}{2} \norm{\var_{k} - \var^*_{k}}_2^2 
\le e_{k}
= F_{k}(\var_{k}) - F_{k}(\var^*_{k})$, because $F_{k}$ is
$\rho$-strongly convex. 

Note that $\grad F_{k}(\var_{k}) = g_{k} + \grad \Phi(\var_{k})$,
thus $\var_{k+1}$ is obtained by doing an $\frac{(\lambda - 1)R^2}{\rho}$-FW
step on $F_{k}$.
On the other hand $F_{k}$ has curvature at most $\frac{2 R^2}{\rho}$ 
because $F_{k}$ is 1-smooth, and
	$\norm{\var - \var'}_2^2 
	\le \frac{2}{\rho}(\Phi(\var) - \Phi(\var')) 
	\le \frac{2 R^2}{\rho}$
for any $\var, \var' \in \cohull$ due to the $\rho$-strong convexity of $\Phi$.
Using \Cref{lem:approx-recurrence}, we have
$F_{k}(\var_{k+1}) - F_{k}(\var^*)
	\le (1 - \eta_{k}) e_{k} + \frac{\lambda R^2}{\rho} \eta_{k}^2$.
Thus we have
\begin{align*}
  e_{k+1} 
    &= F_{k}(\var_{k+1}) - F_{k}(\var^*_{k+1}) 
      + \sigma_{k} g_{k} (\var_{k+1} - \var^*_{k+1}) \\
    &\le F_{k}(\var_{k+1}) - F_{k}(\var^*_{k})
      + \sigma_{k} L \norm{\var_{k+1} - \var^*_{k+1}}_2 \\
    &\le (1 - \eta_{k}) e_{k}
      + \frac{\lambda R^2}{\rho} \eta_{k}^2
			+ \sqrt{\frac{2}{\rho}} \sigma_{k} L \sqrt{e_{k+1}}.
\end{align*}
Using \Cref{lem:arsfw-recurrence}, we have $e_{k} \le K \eta_{k}$.
Thus $\norm{\var_{k} - \var_{k}^*}_2 \le \sqrt{\frac{2K}{\rho} \eta_{k}}$.
Hence we have
\begin{align*}
\sum_{k=1}^{t} \sigma_{k} (h_{k}(\var_{k}) - h_{k}(\var^*) )
	&\le \sum_{k=1}^{t} \left(3L \sigma_{k} \norm{\var_{k} - \var^*_{k}}_2
		+ \frac{2 \sigma_{k}^2 L^2}{\rho}\right) + R^2 \\
	&\le \sum_{k=1}^{t} \left(3L \sqrt{\frac{2K}{\rho}} \sigma_k \sqrt{\eta_{k}}
		+ \frac{2 c L^2}{\rho} \sigma_{k} \sqrt{\eta_k} \right)
		+ R^2.  \numberthis  \label{eq:a}
\end{align*}
We used the fact that 
$\sigma_{k}^2 
= \sigma_{k} c\eta_{k}^{3/2}
\le c \sigma_{k} \sqrt{\eta_{k}}$ in the last inequality.
Now observe that we have
\begin{align*} 
	&\quad \E \left(\sum_{k=1}^{t} \sigma_{k} (h_{k}(\var_{k}) - h_{k}(\var^*))\right) \\
	&= \sum_{k=1}^{t} \E\left(\sigma_{k} (h_{k}(\var_{k}) - h_{k}(\var^*))\right) 
	= \sum_{k=1}^{t} \E\left(\sigma_{k} (f(\var_{k}) - f(\var^*))\right) \\
	&= \E\left(\sum_{k=1}^{t} \sigma_{k} (f(\var_{k}) - f(\var^*))\right)
	\ge \E\left((\sum_{k=1}^{t} \sigma_{k}) (f(\bar{\var}_t) - f(\var^*))\right),
	\numberthis \label{eq:b}
\end{align*}
where the first equality holds due to linearity of expectation, 
the second equality holds because we take expectation with respect to $i_{k}$ (but
not $w_{k}$),
the third equality holds due to linearity of expectation, and
the last inequality holds due to the convexity of $f$.
From \Cref{eq:a} and \Cref{eq:b}, we obtain \Cref{eq:rate1}.

When $p = \frac{1}{2}$, observe that
$\sum_{k=1}^{t} \sigma_{k} 
= \sum_{k=1}^{t} c k^{-3/4} 
\ge c t^{1/4}$,
$\sum_{k=1}^{t} \sigma_{k} \sqrt{\eta_{k}}
= \sum_{k=1}^{t} k^{-1} 
\le \ln t +1$, then using \Cref{eq:rate1} we obtain \Cref{eq:rate2}.

When $\sigma_{k} = \sigma = c t^{-3/4}$, 
using \Cref{eq:a} and \Cref{eq:b} and observe that 
$\sum_{k=1}^{t} \sqrt{\eta_{k}} \le \frac{4}{3} t^{3/4}$, we obtain
\Cref{eq:rate3}.
\end{proof}

\section{Conclusion}
We have given a unified analysis of two approximate greedy algorithms, and
presented new results on convergence and their connections.
In addition, we studied their stochastic versions and demonstrated these
algorithms can be robust against the optimization error in each iteration.

There are a few questions for further exploration.
From recent results in FW and the equivalence result in \Cref{thm:equiv}, 
it is natural to ask whether Jones' algorithm converges at faster rates under
suitable additional assumptions, and whether more efficient stochastic Jones'
algorithm can be obtained.
For stochastic FW, the nonsmooth case seems to be harder than the smooth case.
Results on complexity lower bounds will lead to better understanding on the
greedy algorithms and these problems.

\newpage
{\footnotesize
\bibliographystyle{plainnat}
\bibliography{ref}
}

\newpage
\section*{Supplementary Material}
\lemapproxrecurrence*
\begin{proof}
First consider the $\epsilon_{k}$-greedy algorithm. We have 
\begin{align*}
  f\left((1-\eta_{k}) \var_{k} + \eta_{k} d_{k} \right) 
    &\le \min_{d' \in \atoms} f\left((1 - \eta_{k}) \var_{k} + \eta_{k} d'\right) + \epsilon_{k} \\
    &\le \min_{d' \in \atoms}
      \left(f(\var_{k}) + \eta_{k} \grad f(\var_{k})^\top (d' - \var_{k}) + \frac{M}{2} \eta_{k}^2 \right)
      + \epsilon_{k} \\
    &\le f(\var_{k}) + \eta_{k} \min_{d' \in \atoms} \grad f(\var_{k})^\top (d' - \var_{k}) 
      + \epsilon'_{k} \\
    &\le f(\var_{k}) + \eta_{k} (f(\var^*) - f(\var_{k})) + \epsilon'_{k}.
\end{align*}
Let $e_{k} = f(\var_{k}) - f(\var^*)$, and subtract both sides of the above inequality
by $f(\var^*)$, we obtain
\begin{align*}
  e_{k+1} \le (1 - \eta_{k}) e_{k} + \epsilon'_{k}.
\end{align*}

For the $\epsilon_{k}$-greedy FW algorithm, we have
\begin{align*}
  f\left((1 - \eta_{k}) \var_{k} + \eta_{k} d_{k}\right) 
    &\le f(\var_{k}) + \eta_{k} \grad f(\var_{k})^\top (d_{k} - \var_{k}) + \frac{M}{2} \eta_{k}^2 \\
    &\le f(\var_{k}) + \eta_{k} (\min_{d' \in \atoms} \grad f(\var_{k})^\top (d_{k} - \var_{k}) + \epsilon_{k})
      + \frac{M}{2} \eta_{k}^2 \\
    &\le f(\var_{k}) + \eta_{k} \left(f(\var^*) - f(\var_{k})\right) + \eta_{k} \epsilon_{k}
      + \frac{M}{2} \eta_{k}^2.
\end{align*}
Subtracting both sides of the inequality by $f(\var^*)$, we obtain
\begin{align*}
  e_{k+1} \le (1 - \eta_{k}) e_{k} + \epsilon'_{k}.
\end{align*}
\end{proof}

\thmrate*
\begin{proof}
Let $C = 2M+4c$. 
From \Cref{lem:approx-recurrence}, for both $c$-greedy and $c$-FW algorithms, we have
\begin{align*}
	e_{k+1} \le (1 - \eta_{k}) e_{k} + \frac{C}{4} \eta_{k}^2.
\end{align*}
We prove the bound by induction.
Taking $k = 0$, we obtain $f(\var_{1}) - f(\var^*) \le \frac{C}{4} < \frac{C}{k+2}$.

For the inductive, assume the bound holds for $k$, that is, 
$e_{k} \le \frac{C}{k+2}$, then we have
\begin{align*}
	e_{k+1}
    \le \left(1 - \frac{2}{k+2}\right) \frac{C}{k+2} + \frac{C}{(k+2)^2} 
    = \frac{(k+1) C}{(k+2)^2}
    < \frac{C}{k+3},
\end{align*}
where the last inequality holds because 
$(k+1) (k+3) = k^2 + 4k + 3 < (k+2)^2$.
\end{proof}

\thmequiv*
\begin{proof}
\noindent(a)
Suppose the current iterate is $\var_k \in \cohull$. 
If a $\epsilon_{k}$-greedy algorithm yields $(\eta_k, d_k) \in [0, 1] \times \atoms$, then 
\begin{align*}
	&\quad f(\var_{k}) + \eta_{k} \grad f(\var_{k})^\top (d_{k} - \var_{k})  \\
	&\le f\left((1-\eta_{k}) \var_{k} + \eta d_{k} \right)
    && \text{(convexity)} \\
	&\le \min_{d' \in \atoms} f\left((1 - \eta_{k}) \var_{k} + \eta_{k} d'\right) 
		+ \eta_{k} \epsilon_{k}
		&& (\text{$\epsilon_{k}$-greedy}) \\
	&\le \min_{d' \in \atoms} \left(f(\var_{k}) 
		+ \eta_{k} \grad f(\var_{k})^\top (d'-\var_{k}) 
		+ \frac{M}{2} \eta_{k}^2\right) 
		+ \eta_{k} \epsilon_{k}
     && (\text{curvature assumption})\\
	&= f(\var_{k}) 
		+ \eta_{k} \min_{d' \in \atoms} \grad f(\var_{k}) (d' - \var_{k}) 
		+ \eta_{k} \left(\frac{M}{2} \eta_{k} + \epsilon_{k}\right) 
\end{align*}
That is, we have
\begin{align*}
	f(\var_{k}) + \eta_{k} \grad f(\var_{k})^\top (d_{k} - \var_{k}) 
    &\le f(\var_{k}) 
			+ \eta_{k} \min_{d' \in \atoms} \grad f(\var_{k}) (d' - \var_{k}) 
			+ \eta_{k} \left(\frac{M}{2} \eta_{k} + \epsilon_{k}\right).
\end{align*}
Simplifying the above inequality, we have (assuming $\eta_{k} \neq 0$ if $\var_{k}$ is
not an optimal solution)
\begin{align*}
	\grad f(\var_{k})^\top d_{k}
		\le \min_{d' \in \atoms} \grad f(\var_{k}) d' 
			+ \frac{M}{2} \eta_{k} + \epsilon_{k}.
\end{align*}
The case for $c$-greedy algorithms follow easily.

\medskip\noindent(b)
Suppose the current iterate is $\var_{k} \in \cohull$.
If an algorithm is $\epsilon_{k}$-FW, then it gives a $(\eta_{k}, d_{k}) \in [0, 1] \times \atoms$
such that for any $d' \in \atoms$
\begin{align*}
	&\quad f\left((1-\eta_{k}) \var_{k} + \eta_{k} d_{k}\right)  \\
  &\le f(\var_{k}) 
		+ \eta_{k} \grad f(\var_{k})^\top (d_{k} - \var_{k}) 
		+ \frac{M}{2} \eta_{k}^2 
    && \text{(curvature assumption)}\\
  &\le f(\var_{k}) 
		+ \eta_{k} \grad f(\var_{k})^\top (d'-\var_{k}) 
		+ \eta_{k} \epsilon_{k}
		+ \frac{M}{2} \eta_{k}^2 
    && \text{($c$-FW)} \\
  &\le f\left((1-\eta_{k}) \var_{k} + \eta_{k} d'\right) + 
		+ \eta_{k} (\epsilon_{k} + \frac{M}{2} \eta_{k})
    && \text{(convexity)}
\end{align*}
The case for $c$-greedy FW algorithms follow easily.
\end{proof}

\propasjeg*
\begin{proof}
Consider least squares regression 
$\min_{\var \in \cohull} \sum_{i=1}^{3} (x_i^{\top} \var - y_i)^2$, where 
$x_1 = (0, 1)$, 
$x_2 = (-\frac{\sqrt{3}}{2}, -\frac{1}{2})$,
$x_3 = (\frac{\sqrt{3}}{2}, -\frac{1}{2})$, 
$y_1 = 1, y_2 = 1, y_3 = 1$, and
$\atoms = \{x_1, x_2, x_3\}$.

It can be shown that $(\eta_k, d_k) = (1, x_i)$, that is,
$\var_{k+1} = x_i$.
\end{proof}

\thmasj*
\begin{proof}
Let	$u_{k} = \argmin_{d \in \atoms} f((1 - \eta_{k}) \var_{k} + \eta_{k} d_{k})$.
We have
\begin{align*}
	&\quad \eta_{k} \grad \tilde{f}_{k}(\var_{k})^{\top} (d_{k} - \var_{k}) \\
		&= -\tilde{f}_{k}(\var_{k})
			+ \tilde{f}_{k}(\var_{k})
			+ \eta_{k} \grad \tilde{f}_{k}(\var_{k})^{\top} (d_{k} - \var_{k}) \\
		&\le -\tilde{f}_{k}(\var_{k})
			+ \tilde{f}_{k}\left(\var_{k} + \eta_{k}(d_{k} - \var_{k})\right)
			&& (\text{convexity}) \\
		&\le -\tilde{f}_{k}(\var_{k})
			+ \tilde{f}_{k}\left(\var_{k} + \eta_{k}(u_{k} - \var_{k})\right) 
			+ c \eta_{k}^2
			&& (\text{approximate optimality of $d_{k}$}) \\
		&\le -\tilde{f}_{k}(\var_{k})
			+ \tilde{f}_{k}(\var_{k}) 
			+ \eta_{k} \grad \tilde{f}_{k}(\var_{k})^{\top} (u_{k} - \var_{k})
			+ \frac{M+2c}{2} \eta_{k}^2 
			&& (\text{curvature assumption}) \\
		&= \eta_{k} \grad \tilde{f}_{k}(\var_{k})^{\top} (u_{k} - \var_{k})
			+ \frac{M+2c}{2} \eta_{k}^2. \numberthis \label{eq:asj1}
\end{align*}
We have
\begin{align*}
	&\quad f(\var_{k+1}) \\
		&\le f(\var_{k}) 
			+ \eta_{k} \grad f(\var_{k})^{\top} (d_{k} - \var_{k}) 
			+ \frac{M}{2} \eta_{k}^2 \\
		&= f(\var_{k}) 
			+ \eta_{k} \grad \tilde{f}_{k}(\var_{k})^{\top} (d_{k} - \var_{k})
			+ \eta_{k} \left(\grad f(\var_{k}) - \grad \tilde{f}_{k}(\var_{k})\right)^{\top} (d_{k} - \var_{k}) 
			+ \frac{M}{2} \eta_{k}^2 \\
		&\le f(\var_{k})
			+ \left(\eta_{k} \grad \tilde{f}_{k}(\var_{k})^{\top} (u_{k} - \var_{k})
			+ \frac{M+2c}{2} \eta_{k}^2\right)
			+ \eta_{k} \left(\grad f(\var_{k}) - \grad \tilde{f}_{k}(\var_{k})\right)^{\top} (d_{k} - \var_{k}) 
			+ \frac{M}{2} \eta_{k}^2 \\
		&= f(\var_{k})
			+ \eta_{k} \grad f(\var_{k})^{\top} (u_{k} - \var_{k})
			+ \eta_{k} \left(\grad f(\var_{k}) - \grad \tilde{f}_{k}(\var_{k})\right)^{\top} (d_{k} - u_{k}) 
			+ (M + c) \eta_{k}^2 \\
		&\le f(\var_{k})
			+ \eta_{k} (f(\var^*) - f(\var_{k}))
			+ \eta_{k} \left(\grad f(\var_{k}) - \grad \tilde{f}_{k}(\var_{k})\right)^{\top} (d_{k} - u_{k}) 
			+ (M + c) \eta_{k}^2 \\
		&\le f(\var_{k})
			+ \eta_{k} (f(\var^*) - f(\var_{k}))
			+ \eta_{k} \norm{\grad f(\var_{k}) - \grad \tilde{f}_{k}(\var_{k})}_2 D 
			+ (M + c) \eta_{k}^2,
\end{align*}
where the first inequality is due to the curvature assumption,
the second inequality due to \Cref{eq:asj1}, 
the third due to the duality bound,
and the last due to Cauchy-Schwarz and $\norm{d_{k} - u_{k}}_2 \le D$.
Now using convexity and telescoping the above inequality over $k$, we have
\begin{align*}
	&\quad (\sum_{k=1}^{t} \eta_{k}) \left(f(\bar{\var}_{k}) - f(\var^*)\right) \\
	&\le \sum_{k=1}^{t} \eta_{k} (f(\var_{k}) - f(\var^*)) \\
	&\le f(\var_{1}) - f(\var_{t+1}) 
			+ \sum_{k=1}^{t} \eta_{k} D \norm{\grad f(\var_{k}) - \grad \tilde{f}_{k}(\var_{k})}_2
			+ \sum_{k=1}^{t} (M + c) \eta_{k}^2.
\end{align*}
Using $f(\var_{t+1}) \ge f(\var^*)$, 
taking expectation, 
and using 
$\E \norm{\grad f(\var_{k}) - \grad \tilde{f}_{k}(\var_{k})}_2 
\le \frac{L}{\sqrt{b}}$,\footnote{
This is because 
$\left(\E \norm{\grad f(\var_{k}) - \grad \tilde{f}_{k}(\var_{k})} \right)^2
\le \E \norm{\grad f(\var_{k}) - \grad \tilde{f}_{k}(\var_{k})}_2^2
= \frac{1}{b} \E \norm{\grad f_i(\var_{k}) - \grad f(\var_{k})}_2^2
\le \frac{1}{b} \E \norm{\grad f_i(\var_{k})}_2^2
\le \frac{L^2}{b}$, where 
$i$ is randomly drawn from $[n]$.}
we obtain
\begin{align*}
	&\quad \E f(\bar{\var}_{t}) - f(\var^*)
	\le \frac{f(\var_1) - f(\var^*)}{\sum_{k=1}^{t} \eta_{k}}
		+ \frac{\sum_{k=1}^{t} \eta_{k} b_k^{-1/2}}{\sum_{k=1}^{t} \eta_{k}} DL
		+ \frac{\sum_{k=1}^{t} \eta_{k}^2}{\sum_{k=1}^{t} \eta_{k}} (M+c).
\end{align*}

When $b_k = t$ and $\eta_k = t^{-1/2}$ for all $k$, we have 
\begin{align}
	\E f(\bar{\var}_{k}) - f(\var^*)
	\le \frac{f(\var_1) - f(\var^*) + DL + M + c}{\sqrt{t}}.
\end{align}

When $b_k = k$, and $\eta_{k} = k^{-1/2}$, we have 
\begin{align}
	\E f(\bar{\var}_{k}) - f(\var^*)
	\le \frac{f(\var_1) - f(\var^*) + (DL + M + c) (\ln t + 1)}{\sqrt{t}}.
\end{align}
\end{proof}

\propasfwega*
\begin{proof}
Consider 
$\min_{\norm{\var}_2 \le r} \sum_{i=1}^2 (x_i^\top \var - y_i)^2$, where
$x_1 = x_2 = x = (1,1)$, $y_1 = 1$, $y_2 = -1$, and $r = 1/2$.
Here we can take $\atoms = \{\var: \norm{\var}_2 = r\}$.

We first show prove convergence.
Let $X_i$ be the random variable taking value 1 when $(x_1, y_1)$ is sampled
at iteration $i$, and value -1 otherwise.
Define $Y_{k+1} = (1 - \gamma_i) Y_{k} + \gamma_{k} X_{k}$, then
it can be verified that $\var_{k} = Y_{k} x$.
In addition, we can show that $Y_{k}$ converges in probability to 0, which
implies that $Y_{k} x$ converges in probability to a minimizer $\var^* =
(0,0)$ of $f(\var)$, and thus $\E(f(\var_{k})) - f(\var^*)$ converges to 0.

We prove the concentration result of $Y_{k}$ for the more general case where
$X_i$'s are i.i.d. drawn from a distribution on $[a, b]$ with mean $\mu$,
instead of from the uniform distribution on \{-1, 1\}.
First we have $Y_{k} = \sum_{i=0}^{k} w_i X_i$, where 
$w_i = \frac{2}{i+2} \frac{i+1}{i+3} \ldots \frac{k}{k+2} \le \frac{2}{k+2}$.
By Hoeffding's inequality, we have
\begin{align*}
  P(|Y_{k} - \mu| \ge \epsilon) 
    &\le 2 e^{-2 \epsilon^2 / \sum_{i=0}^t (w_i b -  w_i a)^2}. 
\end{align*}
Since each $w_i \le \frac{2}{k+2}$, we have
\begin{align*}
  P(|Y_{k} - \mu| \ge \epsilon) 
    &\le 2 e^{-(k+2) \epsilon^2 / 2(b - a)^2}.
\end{align*}

We have 
$\grad f_i(\var_{k}) = 2 (x_i^\top \var_{k} - y_i) x_i$,
and $\grad f(\var_{k}) = 4 (x^\top \var_{k}) x$.
In addition, 
let $d_{k,i} = \argmin_{d \in \atoms} \langle d, \grad f_i(\var_{k})$, then
\begin{equation*}
  d_{k,i} 
  = -r \sign(x_i^\top \var_{k} - y_i) x_i / \norm{x_i}_2
  = r \sign(y_i) x_i / \norm{x_i}_2
  = \frac{\sign(y_i)}{2\sqrt{2}} x,
\end{equation*}
where the $\sign(x_i^\top \var_{k} - y_i) = \sign(y_i)$ because 
$x_i^\top \var_{k}$ is not large enough to change the sign of $y_i$.
Hence we have
\begin{equation*}
  \E(d_{k}) = \frac{1}{2} d_{k,1} + \frac{1}{2} d_{k,2}  = 0.
\end{equation*}
On the other hand, we have
\begin{equation*}
  \argmin_{\norm{d}_2 \le r} \langle d, \grad f(\var_{k}) \rangle
    = -r \sign(x^\top \var_{k}) x / \norm{x}_2.
\end{equation*}
We thus obtain
\begin{align*}
  \langle \E(d_{k}), \grad f(\var_{k}) \rangle 
    - \min_{d \in \atoms} \langle s, \grad f(\var_{k}) 
  = 0 - \langle - r \sign(x^\top \var_{k}) x / \norm{x}_2, 4 (x^\top \var_{k}) x \rangle 
  = 4r |x^\top \var_{k}| \norm{x}_2.
\end{align*}
Note that $\var_{k}$ has nonzero probability of being $x$, thus there is a
nonzero probability that the above difference equals 
$4 r |x^\top \var_{k}| \norm{x}_2 = 4\sqrt{2}$.
However $\eta_{k} = 2/(k+2)$ converges to 0, thus there is no constant $c$
such that
\begin{equation*}
  \langle \E(d_{k}), \grad f(\var_{k}) \rangle 
    - \min_{d \in \atoms} \langle d, \grad f(\var_{k}) \\
  \le c \eta_{k}
\end{equation*}
for all $k$ and all $\var_{k}$.
\end{proof}

\propasfwegb*
\begin{proof}
Consider least squares regression 
$\min_{||\var||_2 \le r} \sum_{i=1}^{3} (x_i^T \var - y_i)^2$, where 
$r < 1$, $x_1 = (0, 1)$, 
$x_2 = (-\frac{\sqrt{3}}{2}, -\frac{1}{2})$,
$x_3 = (\frac{\sqrt{3}}{2}, -\frac{1}{2})$, and 
$y_1 = 1, y_2 = - 1, y_3 = -1$.

It can be shown that $\var_k$ converges in probability to
$(0, 2r/3)$ as $k \to \infty$.
However, the optimal solution is $(0, r)$.
\end{proof}

\lemlazy*
\begin{proof}
We have 
\begin{align*}
	\frac{\rho}{2} \norm{\var^*_{k} - \var^*_{k+1}}_2^2
	&\le F_{k+1}(\var^*_{k}) - F_{k+1}(\var^*_{k+1}) 
		&& (\text{$F_{k+1}$ is $\rho$-strongly convex}) \\
	&= F_{s}(\var^*_{k}) - F_{s}(\var^*_{k+1}) + \sigma_{k} g_{k}^{\top} (\var^*_{k} - \var^*_{k+1}) 
		&& (\text{definition of $F_{k+1}$})\\
	&\le \sigma_{k} g_{k}^{\top} (\var^*_{k} - \var^*_{k+1}) 
		&& (\text{$F_{s}(\var^*_{k}) \le F_{s}(\var^*_{k+1})$}) \\
	&\le \sigma_{k} \norm{g_{k}}_2 \norm{\var^*_{k} - \var^*_{k+1}}_2.
	&& (\text{Cauchy-Schwarz})
\end{align*}
Hence we have $\norm{\var^*_{k} - \var^*_{k+1}}_2 \le \frac{2 \sigma_{k} \norm{g_{k}}_2}{\rho}$, and
this implies 
\begin{align}
		\sigma_{k} g_{k}^{\top} (\var*_{k} - \var^*_{k+1}) \le \frac{2 \sigma_{k}^2 \norm{g_{k}}_2^2}{\rho}.
		\label{eq:lemlazy1}
\end{align}

We claim that 
\begin{align}
  \sum_{i=1}^{s} \sigma_i g_i^{\top} (\var^*_i - \var)  
    \le \sum_{i=1}^{s} \sigma_i g_i^{\top} (\var^*_i - \var^*_{i+1}) + \Phi(\var) - \Phi(\var^*_1).
		\label{eq:lemlazy2}
\end{align}
This is equivalent to
\begin{align*}
  \sum_{i=1}^{s} \sigma_i g_i^{\top} \var^*_{i+1} + \Phi(\var^*_1)
    \le \sum_{i=1}^{s} \sigma_i g_i^{\top} \var + \Phi(\var).
\end{align*}
This holds when $s = 0$ by the definition of $\var^*_1$.
If this holds for some $s$, then this holds for $s+1$ as follows,
\begin{align*}
  \sum_{i=1}^{s+1} \sigma_i g_i^{\top} \var^*_{i+1} + \Phi(\var^*_1)
    \le \sum_{i=1}^{s} \sigma_i g_i^{\top} \var^*_{s+2} + \sigma_{k+1} g_{k+1}^{\top} \var^*_{s+2} + \Phi(\var^*_{k+1})
    \le \sum_{i=1}^{s+1} \sigma_i g_i^{\top} \var + \Phi(\var).
\end{align*}
where the first inequality uses the inductive assumption, and the second one
holds by the definition of $\var^*_{s+2}$.

Combining \Cref{eq:lemlazy1} and \Cref{eq:lemlazy2}, we have
\begin{align*}
	\sum_{k=1}^{t} \sigma_{k} g_{k}^{\top} (\var^*_{k} - \var) 
	\le \sum_{k=1}^{t} \frac{2 \sigma_{k}^2 \norm{g_{k}}_2^2}{\rho} + \Phi(\var) - \Phi(\var^*_1).
	\le \sum_{k=1}^{t} \frac{2 \sigma_{k}^2 \norm{g_{k}}_2^2}{\rho} + R^2.
\end{align*}
\end{proof}

\lemsfwrecurrence*
\begin{proof}
We first transform the recurrence in \Cref{eq:e} in the form 
$e_{k+1} \le h(e_{k})$ for some function $h$.
For nonnegative numbers $A, B, C$ to satisfy $A \le B + C \sqrt{A}$, we need to have
$(\sqrt{A} - \frac{C}{2})^2 \le B + \frac{C^2}{4}$, or
$\sqrt{A} \le \sqrt{B + \frac{C^2}{4}} + \frac{C}{2}$.
This implies $A \le B + \frac{C^2}{2} + C \sqrt{B + \frac{C^2}{4}}$.
Applying this transformation to the recurrence in \Cref{eq:e}, we have
\begin{align}
  e_{k+1} &\le  (1 - \eta_{k}) e_{k} + \frac{\lambda R^2}{\rho} \eta_{k}^2
		+ \frac{\sigma_{k}^2 L^2}{\rho}
		+ \sqrt{\frac{2}{\rho}} \sigma_{k} L \sqrt{
				(1 - \eta_{k}) e_{k} 
				+ \frac{\lambda R^2}{\rho} \eta_{k}^2 
				+ \frac{\sigma_{k}^2 L^2}{2\rho}} \nonumber \\
		&\le A' + \sqrt{\frac{2}{\rho}} \sigma_{k} L \sqrt{A'}
		\label{eq:e_{k}+1}.
\end{align}
where 
$A' = (1 - \eta_{k}) e_{k} 
+ \frac{\lambda R^2}{\rho} \eta_{k}^2 
+ \frac{c^2 L^2}{\rho} \eta_{k}^2$.
The second inequality holds by observing that $\frac{\sigma_{k}^2 L^2}{\rho}$
and $\frac{\sigma_{k}^2 L^2}{2\rho}$ in the first inequality are smaller than
$\frac{c^2 L^2}{\rho}\eta_{k}^2$, because 
$\sigma_{k}^2 = c^2 \eta_{k}^3 \le c^2 \eta_{k}^2$.

Now we find a value of $K$ by determining a sufficient condition on $K$ such
that $e_{k+1} \le K \eta_{k+1}$ holds when $e_{k} \le K \eta_{k}$.
Assume $e_{k} \le K \eta_{k}$ for some $s$, then
\begin{alignat}{3}
	e_{k+1} &\le K \eta_{k+1} 
	&\quad\Longleftarrow\quad&&
		A' + \sqrt{\frac{2}{\rho}} \sigma_{k} L \sqrt{A'} &\le K \eta_{k+1} 
		\label{eq:back1} \\
	&&\quad\Longleftarrow\quad&&
		A' + \sqrt{\frac{2}{\rho}} c L \sqrt{K} \eta_{k}^{2} &\le K \eta_{k+1}
		\label{eq:back2} \\
	&&\quad\Longleftrightarrow\quad&&
		(1 - \eta_{k}) K \eta_{k} + M \eta_{k}^2 &\le K \eta_{k+1}
		\label{eq:back3} \\
	&&\quad\Longleftrightarrow\quad&&
	\frac{K}{K - M} &\le \frac{\eta_{k}^2}{\eta_{k} - \eta_{k+1}} 
		\label{eq:back4} \\
	&&\quad\Longleftarrow\quad&&
	\frac{K}{K - M} &= \frac{1}{p},
		\label{eq:back5}
\end{alignat}
where $M = \frac{\lambda R^2 + c^2 L^2}{\rho} + \sqrt{\frac{2}{\rho}} cL \sqrt{K}$.
\begin{itemize}
\item
When \Cref{eq:back1} holds, then from \Cref{eq:e_{k}+1}, we have
$e_{k+1} \le K \eta_{k+1}$.
\item
When \Cref{eq:back2} holds, we have $K \eta_{k} > K \eta_{k+1} > A'$.
Thus 
$A' + \sqrt{\frac{2}{\rho}} c L \sqrt{K} \eta_{k}^{2} 
= A' + \sqrt{\frac{2}{\rho}} c \eta_{k}^{3/2} L \sqrt{K \eta_{k}}
\ge A' + \sqrt{\frac{2}{\rho}} \sigma_{k} L \sqrt{A'}$.
\item
Both \Cref{eq:back3} and \Cref{eq:back4} are just rewriting of the previous
inequality.
\item
To show that \Cref{eq:back5} implies \Cref{eq:back4}, it suffices to show
that $\frac{\eta_{k}^2}{\eta_{k} - \eta_{k+1}} \ge \frac{1}{p}$ for any
$s \ge 1$.
Using calculus, we have 
$(1 + \frac{1}{k})^p 
\le 1 + \frac{p}{k}
\le 1 + \frac{p}{k^p}$ for $k \ge 1$.
Hence we have 
$\eta_{k} - \eta_{k+1} 
= \frac{1}{k^p} - \frac{1}{(k + 1)^p}
= \frac{1}{(k + 1)^p} \left(\frac{(k+1)^p}{k^p} - 1\right)
= \frac{1}{(k+1)^p} \left((1 + \frac{1}{k})^p - 1\right) 
\le \frac{1}{(k+1)^p} \frac{p}{k^p}$.
It follows that
$\frac{\eta_{k}^2}{\eta_{k} - \eta_{k+1}} \ge \frac{1}{p} \frac{(k+1)^p}{k^{p}} \ge \frac{1}{p}$.
\end{itemize}
Now we solve \Cref{eq:back5}.
This is equivalent to 
$K = \frac{M}{1-p} 
= \frac{\lambda R^2 + c^2 L^2}{\rho(1-p)} 
	+ \sqrt{\frac{2}{\rho}}\frac{cL}{1-p} \sqrt{K}$, 
which is equivalent to 
$(\sqrt{K} - \sqrt{\frac{1}{2\rho}} \frac{cL}{1-p})^2 
	= \frac{\lambda R^2 + c^2 L^2}{\rho(1-p)} 
		+ \frac{c^2 L^2}{2 \rho (1-p)^2}$,
or
\begin{align*}
K = 
	\left(\sqrt{\frac{1}{2\rho}} \frac{cL}{1-p}
		+ \sqrt{\frac{\lambda R^2 + c^2 L^2}{\rho(1-p)} 
		+ \frac{c^2 L^2}{2 \rho (1-p)^2}}\right)^2.
\end{align*}

To complete the proof it suffices to show that $e_{1} \le K \eta_1$.
This holds because 
\begin{align*} 
	e_1 = F_1(\var_1) - F_1(\var^*_1) = \Phi(\var_1) - \Phi(\var^*_1) = 0 < K \eta_1.
\end{align*}
\end{proof}

\end{document}